\newenvironment{customthm}[1]
  {\innercustomthm}
  {\endinnercustomthm}
\newenvironment{customprop}[1]
  {\innercustomprop}
  {\endinnercustomprop}
\newenvironment{customlemma}[1]
  {\innercustomlemma}
  {\endinnercustomlemma}
\newcommand{\Norm}[1]{ \left\|  #1 \right\| }
\newcommand{\floor}[1]{\lfloor #1 \rfloor }
\newcommand{\Be}{\begin{equation}}
\newcommand{\Ee}{\end{equation}}
\newcommand{\Bm}{\begin{multline}}
\newcommand{\Em}{\end{multline}}
\newcommand{\Bea}{\begin{eqnarray}}
\newcommand{\Eea}{\end{eqnarray}}
\newcommand{\Beas}{\begin{eqnarray*}}
\newcommand{\Eeas}{\end{eqnarray*}}
\newcommand{\Benu}{\begin{enumerate}}
\newcommand{\Eenu}{\end{enumerate}}
\newcommand{\Bi}{\begin{itemize}}
\newcommand{\Ei}{\end{itemize}}
\def\intslash{\fint}
\def\intslash{\rlap{\kern  .32em $\mspace {.5mu}\backslash$ }\int}
\def\qsl{{\rlap{\kern  .32em $\mspace {.5mu}\backslash$ }\int_{Q_x}}}
\def\Norm#1{{ \left\|  #1 \right\| }}
\def\floor#1{{\lfloor #1 \rfloor }}
\def\emph#1{{\it #1 }}
\def\be#1{\begin{equation}\label{ #1}}
\def\endeq{\end{equation}}
\def\endal{\end{align}}
\def\bas{\begin{align*}}
\def\eas{\end{align*}}
\def\bi{\begin{itemize}}
\def\ei{\end{itemize}}
\def\emph#1{{\it #1}}
\def\textbf#1{{\bf #1}}
\theoremstyle{plain}
   \newtheorem{theorem}{Theorem}[section]
   \newtheorem{conjecture}[theorem]{Conjecture}
   \newtheorem{proposition}[theorem]{Proposition}
   \newtheorem{lemma}[theorem]{Lemma}
   \newtheorem{theorem*}{Theorem}
\theoremstyle{remark}
   \newtheorem{remark}[theorem]{Remark}
\theoremstyle{definition}
   \newtheorem{definition}[theorem]{Definition}
\numberwithin{equation}{section}
\begin{document}
\title[Bochner-Riesz multipliers associated with convex domains]{New $L^p$ bounds for Bochner-Riesz multipliers associated with convex planar domains with rough boundary}

\author[L. Cladek]{Laura Cladek}

\address{L. Cladek, Department of Mathematics\\ University of Wisconsin-Madison\\480 Lincoln Drive, Madison, WI 53706, USA}

\email{cladek@math.wisc.edu}

\subjclass[2010]{42B15}

\begin{abstract}
We consider generalized Bochner-Riesz multipliers of the form $(1-\rho(\xi))_+^{\lambda}$ where $\rho(\xi)$ is the Minkowski functional of a convex domain in $\mathbb{R}^2$, with emphasis on domains for which the usual Carleson-Sj\"{o}lin $L^p$ bounds can be improved. We produce convex domains for which previous results due to Seeger and Ziesler are not sharp. We identify two key properties of convex domains that lead to improved $L^p$ bounds for the associated Bochner-Riesz operators. First, we introduce the notion of the ``additive energy" of the boundary of a convex domain. Second, we associate a set of directions to a convex domain and define a sequence of Nikodym-type maximal operators corresponding to this set of directions. We show that domains that have low higher order additive energy, as well as those which have asymptotically good $L^q$ bounds for the corresponding sequence of Nikodym-type maximal operators where $q=(p^{\prime}/2)^{\prime}$, have improved $L^p$ bounds for the associated Bochner-Riesz operators over those proved by Seeger and Ziesler.
\end{abstract}

\thanks{The author would like to thank Andreas Seeger for introducing this problem, and for his guidance and many helpful discussions. Research supported in part by NSF Research and Training grant DMS 1147523}
\maketitle

\section{Introduction}
The Bochner-Riesz operators $R_{\lambda}$ are defined via the Fourier transform by
\begin{align*}
&\mathcal{F}[R_{\lambda}f](\xi)=(1-|\xi|)^{\lambda}_+\widehat{f}(\xi),\qquad \lambda>0,
\\
&\mathcal{F}[R_{0}f](\xi)=\chi_{B_0(1)}(\xi)\widehat{f}(\xi),
\end{align*}
where $\chi_{B_0(1)}$ denotes the characteristic function of the ball of radius $1$ centered at the origin.
In two dimensions the $L^p$ mapping properties of $R_{\lambda}$ are completely known. As first shown by Fefferman in \cite{feff} and later by C\'{o}rdoba in \cite{cor}, if $\lambda>0$ then $R_{\lambda}$ is bounded on $L^p(\mathbb{R}^2)$ if and only if $\lambda>\max((|\frac{2}{p}-1|-\frac{1}{2}), 0)$. It was also shown by Fefferman in \cite{feff2} that $R_0$ is bounded on $L^p(\mathbb{R}^2)$ if and only if $p=2$. One may also consider the following generalization of the two-dimensional Bochner-Riesz operators. Let $\Omega\subset\mathbb{R}^2$ be a bounded, open convex set containing the origin, and let $\rho$ be its Minkowski functional, defined as
\begin{align*}
\rho(\xi)=\inf\{t>0:\,t^{-1}\xi\in\Omega\}.
\end{align*}
Define the generalized Bochner-Riesz operators $T_{\lambda}$ associated to $\Omega$ by
\begin{align*}
&\mathcal{F}[T_{\lambda}f](\xi)=(1-\rho(\xi))^{\lambda}_+\widehat{f}(\xi), \qquad \lambda>0,
\\&\mathcal{F}[T_{0}f](\xi)=\chi_{\Omega}(\xi)\widehat{f}(\xi),
\end{align*}
where $\chi_{\Omega}$ denotes the characteristic function of $\Omega$. Note that in the special case that $\Omega$ is the unit disk, $T_{\lambda}$ is simply $R_{\lambda}$. We emphasize that no further regularity of $\partial\Omega$ is assumed, and for general convex domains $\Omega$ the boundary $\partial\Omega$ need only be Lipschitz.
\newline
\indent
For domains with smooth boundary, the $L^p$ mapping properties of $T_{\lambda}$ were shown by Sj\"{o}lin in \cite{sjolin} to be identical to those of  $R_{\lambda}$. However, for certain convex domains with rough boundary the $L^p$ mapping properties of $T_{\lambda}$ may be improved. In \cite{pod2}, Podkorytov showed that in the case that $\Omega$ is a polyhedron in $\mathbb{R}^d$, $T_{\lambda}$ is bounded on $L^p$ for $1\le p\le\infty$ and for all $\lambda> 0$. In \cite{sz}, Seeger and Ziesler proved a sufficient criterion for $L^p$ boundedness of Bochner-Riesz multipliers associated to general convex domains in $\mathbb{R}^2$. Their results depended on a parameter similar to the upper Minkowski dimension of $\partial\Omega$, defined by a family of ``balls", or caps, and we give a definition below. This parameter may be thought of as measuring how ``curved" the boundary of $\Omega$ is.
\newline
\indent
For any $p\in\partial\Omega$, we say that a line $\ell$ is a \textit{supporting line for $\Omega$ at $p$} if $\ell$ contains $p$ and $\Omega$ is contained in the half plane containing the origin with boundary $\ell$. Let $\mathcal{T}(\Omega, p)$ denote the set of supporting lines for $\Omega$ at $p$. Note that if $\partial\Omega$ is $C^1$, then $\mathcal{T}(\Omega, p)$ has exactly one element, the tangent line to $\partial\Omega$ at $p$. For any $p\in\partial\Omega$, $\ell\in \mathcal{T}(\Omega, p)$, and $\delta>0$, define
\begin{align}
B(p, \ell, \delta)=\{x\in\partial\Omega: \text{dist}(x, \ell)<\delta\}.
\end{align}
Let
\begin{align}
\mathcal{B}_{\delta}=\{B(p, \ell, \delta):\,p\in\partial\Omega, \ell\in\mathcal{T}(\Omega, p)\},
\end{align}
and let $N(\Omega, \delta)$ be the minimum number of balls $B\in\mathcal{B}_{\delta}$ needed to cover $\partial\Omega$. Let
\begin{align}
\kappa_{\Omega}=\limsup_{\delta\to 0}\frac{\log N(\Omega, \delta)}{\log\delta^{-1}}.
\end{align}
It is easy to show using Cauchy-Schwarz that for any convex domain $\Omega$, $0\le\kappa_{\Omega}\le\frac{1}{2}$. If $\partial\Omega$ is smooth, then $\kappa_{\Omega}=1/2$. This can be seen by noting that there is a point where $\partial\Omega$ has nonvanishing curvature, and near this point the contribution to $N(\Omega, \delta)$ is $\approx\delta^{-1/2}$. \newline
\indent
We now state the main result from \cite{sz}, due to Seeger and Ziesler.
\begin{customthm}{A}[\cite{sz}]\label{sz1}
Suppose that $1\le p\le \infty$, $\lambda>0$ and $\lambda>\kappa_{\Omega}(4|1/p-1/2|-1)$. Then $T_{\lambda}$ is bounded on $L^p(\mathbb{R}^2)$. 
\end{customthm}
Note that as $\kappa_{\Omega}$ gets smaller, the range of $p$ for which $T_{\lambda}$ is bounded improves, so for rough domains it is possible to do much better than the optimal result for domains with smooth boundary. The authors of \cite{sz} also showed that for each $\kappa\in(0, 1/2)$ there is a convex domain $\Omega$ with $\kappa_{\Omega}=\kappa$ for which Theorem \ref{sz1} is sharp.

\begin{customthm}{B}[\cite{sz}]\label{sz2}
Let $0<\kappa<1/2$. Then there exists a convex domain $\Omega$ with $C^{1, \frac{\kappa}{(1-\kappa)}}$ boundary satisfying $\kappa_{\Omega}=\kappa$ so that for $1\le p<4/3$ the operator $T_{\lambda}$ associated to $\Omega$ is bounded on $L^p(\mathbb{R}^2)$ if and only if $\lambda>\kappa_{\Omega}(4/p-3)$.
\end{customthm}
We will show that for every $\kappa\in (0, 1/2)$ sufficiently small there exists a convex domain $\Omega$ with $\kappa_{\Omega}=\kappa$ for which Theorem \ref{sz1} is not sharp. 
\begin{theorem}\label{2ndmainprop}
Let $m\ge 2$ be an integer. Let $\kappa\in (0, \frac{1}{4m-2}]$. Then there exists a convex domain $\Omega$ with $\kappa_{\Omega}=\kappa$ so that for $1\le p\le\frac{2m}{2m-1}$, $T_{\lambda}$ is bounded on $L^p(\mathbb{R}^2)$ if $\lambda>\kappa_{\Omega}(\frac{m+2}{p}-m-1)$, and for $4/3\le p\le 4$, $T_{\lambda}$ is bounded on $L^p(\mathbb{R}^2)$ if and only if $\lambda>0$.
\end{theorem}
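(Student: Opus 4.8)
\emph{Strategy.} The ``if'' assertions will follow from the general sufficient condition we prove below --- roughly, that a convex domain $\Omega$ with $\kappa_\Omega=\kappa\le\frac1{4m-2}$ whose boundary has asymptotically minimal $m$-th order additive energy (minimal up to a factor $\delta^{-\varepsilon}$ at scale $\delta$, for every $\varepsilon>0$) satisfies $\|T_\lambda\|_{L^p\to L^p}<\infty$ for $1\le p\le\frac{2m}{2m-1}$ and $\lambda>\kappa(\frac{m+2}{p}-m-1)$ --- once we exhibit one such $\Omega$; for $4/3\le p\le4$ the sufficiency of $\lambda>0$ is already Theorem~\ref{sz1}. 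The ``only if'' clause for $4/3\le p\le4$ is separate, and since $\lambda<0$ reduces to $\lambda=0$ it amounts to proving $T_0$ unbounded on $L^p(\mathbb R^2)$ for $p\neq2$.

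\emph{The domain.} Put $d=\kappa/(1-\kappa)$, so that $\kappa\le\frac1{4m-2}$ forces $d\le\frac1{4m-3}<\frac1m$. Fix a large integer $\Lambda=\Lambda(m,\kappa)$ and a Sidon set of order $m$ --- a $B_m$ set, i.e.\ one whose $m$-fold sumset has no coincidences beyond permutations --- $A\subset\{0,1,\dots,\lfloor\Lambda/2m\rfloor\}$ with $|A|=\lceil\Lambda^{d}\rceil$; this exists because $dm<1$, and the bound $\lfloor\Lambda/2m\rfloor$ guarantees that $m$-fold digit sums stay below $\Lambda$ (no carrying in base $\Lambda$). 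Let $E\subset[0,1]$ be the self-similar Cantor set $\{\sum_{l\ge1}a_l\Lambda^{-l}:a_l\in A\}$, $\mu$ its natural self-similar probability measure, $\phi$ the convex function on $[0,1]$ with $\phi(0)=\phi'(0)=0$ and $\phi''=\mu$ (so $\phi\in C^{1,d}$ and $\phi$ is affine on every complementary interval of $E$), and let $\Omega$ be a bounded open convex set containing the origin whose boundary is an affine copy of the graph of $\phi$ on one arc and finitely many line segments elsewhere. Near $E$ a cap of height $\delta$ has horizontal width $\asymp\delta^{1/(1+d)}$, so $N(\Omega,\delta)\asymp\delta^{-d/(1+d)}=\delta^{-\kappa}$ (the polygonal part and the caps over complementary intervals of $E$ contributing at most the same order), and a routine refinement of the construction (as in the constructions of \cite{sz}) makes $\kappa_\Omega$ equal to $\kappa$ exactly.

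\emph{Minimal additive energy, the analytic core, and sharpness.} At scale $\delta$ the caps near $E$ are (up to those over complementary intervals) indexed by the level-$n$ cylinders with $\Lambda^{-(1+d)n}\asymp\delta$, the center of such a cap having first coordinate $x=\sum_{l\le n}a_l\Lambda^{-l}$ with $a_l\in A$. Given $2m$ such caps, the constraint $\bigl|\sum_{i=1}^m x^{(i)}-\sum_{i=1}^m y^{(i)}\bigr|\lesssim\delta$ forces the two sums to agree up to $O(\Lambda^{-n})$, i.e.\ (since $\Lambda^{-n}\gg\delta$) up to the last digit; by no carrying this forces $\sum_i a_l^{(i)}=\sum_i b_l^{(i)}$ for $l<n$, and the $B_m$ property then forces the digit multisets at each such level to coincide, so that $E_m(\partial\Omega,\delta)\lesssim_m E_m(A)^{n}\asymp(m!)^nN(\Omega,\delta)^m$. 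As $n\asymp\log(1/\delta)/\log\Lambda$, taking $\Lambda$ large makes $(m!)^n\le\delta^{-\varepsilon}$ for any prescribed $\varepsilon>0$, so the $m$-th order additive energy of $\partial\Omega$ is asymptotically minimal, and the general theorem yields the stated $L^p$ bounds. For the ``only if'' part: $E$ has positive Hausdorff dimension, so $\partial\Omega$ has infinitely many, densely distributed supporting directions, and a Fefferman--Besicovitch compression of tubes pointing in these directions contradicts $L^p$-boundedness of $T_0$ for every $p\neq2$, exactly as for the ball multiplier. I expect the main obstacle to be the general theorem itself: showing that asymptotically minimal $m$-th order additive energy promotes the Carleson-Sj\"{o}lin/Seeger-Ziesler square-function bound for $T_\lambda$ to the exponent $\kappa(\frac{m+2}{p}-m-1)$ throughout $1\le p\le\frac{2m}{2m-1}$. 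This should rest on an $L^{2m}$ (equivalently $L^{p'}$, $p'\ge 2m$) estimate in which the additive energy controls the $L^2$-orthogonality among the $m$-fold products of cap pieces; the hypothesis $\kappa\le\frac1{4m-2}$ and the presence of the caps over complementary intervals of $E$ are where one must check that nothing is lost.
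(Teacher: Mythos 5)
Your construction is in the right spirit --- a Cantor-type boundary engineered so that the caps at scale $\delta$ have essentially disjoint $m$-fold sumsets, then an $L^{2m}$/$L^{(2m)'}$ Córdoba-style argument --- but the self-similar Cantor set you build does \emph{not} yield $\mathcal{E}_m(\partial\Omega)=0$, which is what the general theorem needs to produce the stated exponent $\kappa_\Omega(\frac{m+2}{p}-m-1)$.

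Here is the problem. For a fixed domain you must fix $\Lambda$ once and for all. With $|A|\asymp\Lambda^d$ the number of cylinder levels needed to resolve scale $\delta$ is $n\asymp\log(1/\delta)/\log\Lambda$, i.e.\ linear in $\log(1/\delta)$. Your estimate $E_m(\partial\Omega,\delta)\lesssim (m!)^n N(\Omega,\delta)^m$ then gives an overlap $\asymp(m!)^n=\delta^{-\log(m!)/\log\Lambda}$, which is a \emph{fixed positive} power of $\delta^{-1}$. You cannot make this $O(\delta^{-\varepsilon})$ for every $\varepsilon>0$ simultaneously by ``taking $\Lambda$ large''; $\Lambda$ cannot depend on $\varepsilon$ once $\Omega$ is chosen. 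So for your domain $\mathcal{E}_m(\partial\Omega)\le\log(m!)/\log\Lambda$, a strictly positive number, and plugging $\alpha=\log(m!)/\log\Lambda>0$ into the general theorem yields only $\lambda>\kappa_\Omega(\frac{m+2}{p}-m-1)+\alpha\frac{p-1}{p}$, short of the claim. The paper avoids this precisely by a \emph{fast-branching} construction: at stage $k$ each interval is split into $2^{k+4}$ children, so the number of levels needed to reach scale $\delta$ is $K(\delta)\lesssim\sqrt{\log(1/\delta)}=o(\log(1/\delta))$, and then $(m!)^{K(\delta)}=\delta^{-o(1)}$, giving $\mathcal{E}_m=0$ for a \emph{single} domain. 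A constant branching ratio, as in your self-similar $B_m$-digit Cantor set, cannot do this; some growth of the branching number in the level is essential. (Everything else about the construction --- the $C^{1,d}$ profile with $d=\kappa/(1-\kappa)$, the computation $N(\Omega,\delta)\asymp\delta^{-\kappa}$, the use of a Sidon/$B_m$ digit alphabet to control additive overlaps level by level --- is sound, and one could in fact run your idea with a Sidon alphabet of growing cardinality; the paper instead uses a greedy disjointness lemma to supply the growing families of subintervals.)

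Two minor further remarks. First, for the ``only if'' clause at $\lambda=0$: it is not enough to say that $E$ has positive Hausdorff dimension so the supporting directions are ``densely distributed''; a Cantor set of directions is not dense anywhere, and the Fefferman--Besicovitch tube compression for such direction sets is a nontrivial theorem (Bateman, and Bateman--Katz for Cantor directions). The paper's route --- $\kappa_\Omega>0$ implies $\Theta$ is not a finite union of finite-order lacunary sets, hence by Bateman's theorem $\Theta$ admits Kakeya sets, hence Fefferman's argument applies --- is the one to use, or cite those results directly. Second, you correctly identify that the analytic core is an $L^{2m}$ estimate for which additive energy controls $L^2$-orthogonality of $m$-fold products of cap pieces, and this matches the paper's Proposition~\ref{finalprop}, but you leave that proof entirely as a black box; the paper's genuine content is in executing that step (via the decomposition of Proposition~\ref{propB}, Rubio de Francia, and the Nikodym maximal function), not just in the domain construction.
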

Note that the case $m=2$ above corresponds to Theorem \ref{sz1}, and that if $m\ge 3$ Theorem \ref{2ndmainprop} gives an improvement over Theorem \ref{sz1} in the range $1\le p<\frac{2m}{2m-1}$ (and of course, in the dual range as well). Theorem \ref{2ndmainprop} demonstrates that how ``curved" the boundary of a convex planar domain is, as measured by the parameter $\kappa_{\Omega}$, does not alone determine the $L^p$ mapping properties of the associated Bochner-Riesz operators, but rather there must be other properties of $\Omega$ that play a role. Theorem \ref{2ndmainprop} also shows that there exist domains with $\kappa_{\Omega}>0$ such that $p_{\text{crit}}<4/3$.
\newline
\indent
In the proof of Theorem \ref{sz2}, a crucial property of the domains constructed was that their boundaries contained long arithmetic progressions at every scale, in the sense that for every $\delta>0$ the boundary could be covered by essentially disjoint balls in $\mathcal{B}_{\delta}$ such that a large sequence of consecutive balls were essentially equally spaced in a single coordinate direction. We now describe a simplified version of their construction, removing the requirement that $\Omega$ has $C^{1, \frac{\kappa_{\Omega}}{1-\kappa_{\Omega}}}$ boundary in the statement of Theorem \ref{sz2}, as well as sharpness at the endpoint $\lambda=\kappa_{\Omega}(\frac{4}{p}-3)$. Choose a sequence of consecutive intervals $I_1, I_2, \ldots$ in $[0, 1]$ such that $I_k$ has length $2^{-k(1/2-\kappa_{\Omega})}$. For each $k$, let $E_k$ be a set of $2^{k\kappa_{\Omega}}$ essentially equally spaced points in $I_k$ at a distance $\approx 2^{-k/2}$ apart. Now for each $k$, let $\Omega_k$ denote the convex polygon with vertices 
\begin{align*}
\{(-1, 1); (-1, -2); (0, 1)\}\cup\{(x_1, x_1^2-2):\,x_1\in\bigcup_{1\le j\le k}E_j\}.
\end{align*}
Let $\Omega$ be the uniform limit of $\{\Omega_k\}$ as $k\to\infty$. Then one may show using similar arguments to those presented in \cite{sz} in the proof of Theorem \ref{sz2} that  whenever $1\le p<4/3$, $T_{\lambda}$ is bounded on $L^p(\mathbb{R}^2)$ only if $\lambda\ge\kappa_{\Omega}(\frac{4}{p}-3)$.
\newline
\indent
The domains we construct to prove Theorem \ref{2ndmainprop} will differ from those constructed in \cite{sz} to prove Theorem \ref{sz2} in in that they will exhibit ``low $n$-additive energy" at every scale for some $n>2$. To produce such domains will require a particular kind of ``fast-branching" Cantor-type construction. We define the $n$-additive energy of $\partial\Omega$ as follows.
\begin{definition}
Let $n\ge 2$ be an integer, and let $\Omega$ be a bounded, convex domain in $\mathbb{R}^2$. Let $\mathfrak{B}_{\delta}=\{B_1, B_2, \ldots, B_{N(\Omega, \delta)}\}$ be a collection of balls in $\mathcal{B}_{\delta}$ covering $\partial\Omega$. Let $\Xi_{\mathfrak{B}_{\delta}, n}$ be the smallest integer such that $\Xi_{\mathfrak{B}_{\delta}, n}=M_0^{2n}\cdot M_1$ and we may write $\mathfrak{B}_{\delta}$ as a union of $M_0$ subcollections $\mathfrak{B}_{\delta, 1}\ldots, \mathfrak{B}_{\delta, M_0}$ such that for each $1\le k\le M_0$, no point of $\mathbb{R}^2$ is contained in more than $M_1$ of the sets $B_{i_1}+\cdots+B_{i_n}$ where $B_{i_j}\in\mathfrak{B}_{\delta, k}$ for all $j$. Let $\Xi_{\delta, n}=\min_{\mathfrak{B}_{\delta}}(\Xi_{\mathfrak{B}_{\delta}, n})$, where the minimum is taken over all collections of balls in $\mathcal{B}_{\delta}$ covering $\partial\Omega$ with $\text{card}(\mathcal{B}_{\delta})=N(\Omega, \delta)$. We define the \textit{$n$-additive energy} of $\partial\Omega$ to be
\begin{align*}
\mathcal{E}_{n}(\partial\Omega)=\limsup_{\delta\to 0}\frac{\log(\Xi_{n, \delta})}{\log(\delta^{-1})}.
\end{align*}
\end{definition}
As a consequence of a lemma proven in \cite{sz}, we have $\mathcal{E}_2(\partial\Omega)=0$ for all convex domains $\Omega$. However, general convex domains fail to satisfy $\mathcal{E}_n(\partial\Omega)=0$ for some $n>2$, but the domains we construct will have this property.
\newline
\indent
To discuss a second important property that leads to improved $L^p$ bounds for generalized Bochner-Riesz multipliers, we first need to associate a set of directions to $\Omega$. Given $x\in\partial\Omega$, let $\theta_{x}, \theta_{x}^{\prime}$ be the slopes of two supporting lines at $x$ with maximum difference in angle (note there is a unique choice of two lines). We will allow slopes to be infinite to include the possibility of vertical lines. Note that if we choose $x$ so that $\partial\Omega$ may be parametrized near $x$ by $(\alpha, \gamma(\alpha))$, then $\theta_{x}$ and $\theta_{x}^{\prime}$ are simply the left and right derivatives of $\gamma$ evaluated at $x$. Let 
\begin{align*}
\Theta=\Theta(\Omega)=\{\theta_{x}, \theta_{x}^{\prime}:\,x\in\partial\Omega\}\subset\mathbb{R}\cup\{\infty\}.
\end{align*}
Define a sequence of Nikodym-type maximal operators $\{M_{\Theta, \delta}\}$ by
\begin{align*}
M_{\Theta, \delta}f(x)=\sup_{x\in R\in\mathcal{R}_{\delta}}\frac{1}{|R|}\int_Rf(y)\,dy,
\end{align*}
where $\mathcal{R}_{\delta}$ denotes the set of all rectangles of eccentricity $\le\delta^{-1}$ with long side having slope in $\Theta$. We will be interested in how $\Norm{M_{\Theta, \delta}}_{L^p\to L^p}$ behaves as $\delta\to 0$. It was shown by Bateman in \cite{bateman} that if $M_{\Theta}$ denotes the directional maximal operator corresponding to $\Theta$, then $M_{\Theta}$ is unbounded on $L^p$ for all $p$ such that $1\le p<\infty$ unless $\Theta$ is a union of finitely many lacunary sets of finite order, and it is easy to show that any domain $\Omega$ with $\Theta(\Omega)$ a union of finitely many lacunary sets of finite order satisfies $\kappa_{\Omega}=0$. Thus for all domains with $\kappa_{\Omega}>0$ we must necessarily have that $\Norm{M_{\Theta, \delta}}_{L^p\to L^p}\to\infty$ as $\delta\to 0$. 

\begin{definition}
We say that $\Theta$ is \textit{$p$-sparse} if
\begin{align*}
\Norm{M_{\Theta, \delta}}_{L^p\to L^p}=O(\delta^{-\epsilon})
\end{align*}
for every $\epsilon>0$.
\end{definition}
It follows immediately by a theorem of C\'{o}rdoba (see \cite{cor2}) regarding the $L^2$ bounds for the Nikodym maximal function in $\mathbb{R}^2$ that every $\Theta$ is $p$-sparse for $2\le p<\infty$. We will see that if $\Theta(\Omega)$ is $p$-sparse for some $p<2$, then $T_{\lambda}$ satisfies improved $L^p$ bounds over those stated in Theorem \ref{sz1}. However, it is unclear whether the domains we construct are $p$-sparse for some $p<2$; hence construction of domains with $\kappa_{\Omega}>0$ that are $p$-sparse for some $p<2$ remains an interesting open question.
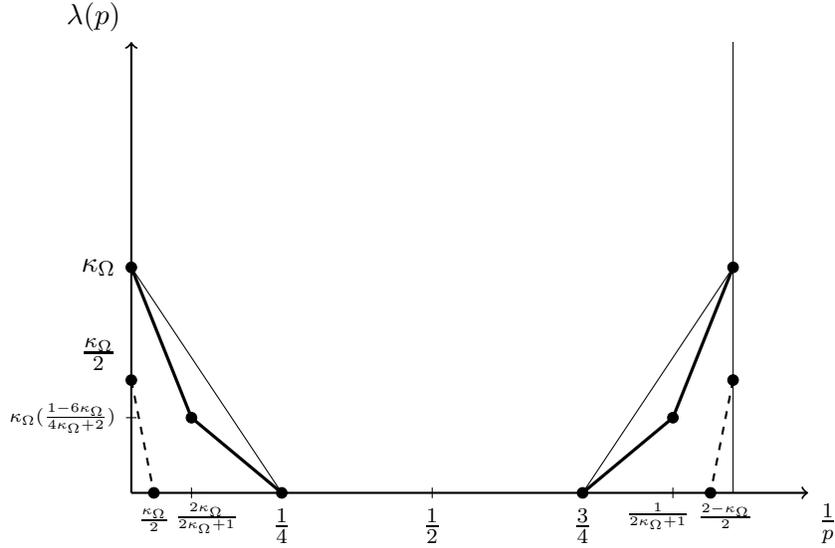
\begin{figure}
\begin{tikzpicture}
\draw[thick, ->] (0, 0)--(9, 0) node[anchor=north west] {$\frac{1}{p}$};
\draw[thick, ->] (0, 0)--(0, 6) node[anchor=south east] {$\lambda(p)$};

 \draw(4 cm, 2pt) -- (4 cm, -2pt) node[anchor=north] {$\frac{1}{2}$};
 \draw(2 cm, 2pt) -- (2 cm, -2pt) node[anchor=north] {$\frac{1}{4}$};
 \draw(6 cm, 2pt) -- (6 cm, -2pt) node[anchor=north] {$\frac{3}{4}$};

 \draw(1.0 cm, 0pt) -- (1.0 cm, 0pt) node[anchor=north] {\tiny $\frac{2\kappa_{\Omega}}{2\kappa_{\Omega}+1}$};
  \draw(7 cm, 0pt) -- (7 cm, 0pt) node[anchor=north] {\tiny $\frac{1}{2\kappa_{\Omega}+1}$};
 \filldraw (8, 3) circle (2pt);
 \filldraw (0, 3) circle (2pt);
 \filldraw (2, 0) circle (2pt);
 \filldraw (6, 0) circle (2pt);
 \draw (8, 3)--(6, 0);
 \draw (0, 3)--(2, 0); 

 \filldraw(0, 1.5) circle (2pt);

\draw (7.9 cm, 0pt)--(7.9 cm, 0pt) node[anchor=north] {\tiny$\frac{2-\kappa_{\Omega}}{2}$};
\draw (.3 cm, 2pt)--(.3 cm, -2pt) node[anchor=north] {\tiny$\frac{\kappa_{\Omega}}{2}$};
\filldraw (.3, 0) circle (2pt);
\filldraw (7.7, 0) circle (2pt);
\filldraw (8, 1.5) circle (2pt);
\draw(.8 cm, 2pt) -- (.8 cm, -2pt);
\draw(7.2 cm, 2pt) -- (7.2 cm, -2pt);
\draw(2pt, 3cm) -- (-2pt, 3cm) node[anchor=east] {$\kappa_{\Omega}$};
\draw(2pt, 1.5 cm) -- (-2pt, 1.5 cm) node[anchor=south east] {$\frac{\kappa_{\Omega}}{2}$};
\draw(2pt, 1 cm) -- (-2pt, 1 cm) node[anchor= east] {\tiny $\kappa_{\Omega}(\frac{1-6\kappa_{\Omega}}{4\kappa_{\Omega}+2})$};
\filldraw (.8, 1) circle (2pt);
\filldraw (7.2, 1) circle (2pt);
 \draw[very thick] (0, 3)--(.8, 1. 0);
  \draw[very thick] (.8, 1)--(2, 0);
 \draw[very thick] (7.2, 1)--(8, 3);
  \draw[very thick] (7.2, 1)--(6, 0);

\draw[thick, dashed] (8, 1.5)--(7.7, 0);
\draw[thick, dashed] (0, 1.5)--(.3, 0);
\draw (8, 0)--(8, 6);
  
\end{tikzpicture}
\caption{Here we sketch $\lambda(p)$ as a function of $\frac{1}{p}$ for certain convex domains, where $T_{\lambda}$ is bounded on $L^p$ for all $\lambda>\lambda(p)$. In this diagram, it is assumed that $\kappa_{\Omega}\le\frac{1}{10}$. The thin solid lines correspond to the domains constructed in \cite{sz} in the proof of Theorem \ref{sz2}; these domains exhibit long arithmetic progressions at every scale. The thick solid lines correspond to the domains that we construct to prove Theorem \ref{2ndmainprop} using a fast-branching Cantor-type construction; these lines as drawn are only valid if $\kappa_{\Omega}=\frac{1}{4m-2}$ for $m\ge 3$ an integer. The dashed lines represent lower bounds for general convex domains. That is, for any convex domain, $T_{\lambda}$ is unbounded on $L^p$ if $(\frac{1}{p}, \lambda)$ lies below the dashed lines.}
\label{fig1}
\end{figure}
We now formulate a general theorem on $L^p$ mapping properties of Bochner-Riesz means in terms of the $n$-additive energy of $\partial\Omega$ and the $L^q$-mapping properties of $M_{\Theta, \delta}$.

\begin{theorem}\label{genthm}
Let $\Omega$ be a convex domain in $\mathbb{R}^2$ containing the origin and let $\Theta$ be its associated set of directions. Let $n\ge 2$ be an integer. Suppose that $\mathcal{E}_n(\partial\Omega)=\alpha$ for some integer $0\le\alpha\le n\kappa_{\Omega}$ and that 
\begin{align*}
\Norm{M_{\Theta, \delta}}_{L^{\frac{n}{n-1}}(\mathbb{R}^2)\to L^{\frac{n}{n-1}}(\mathbb{R}^2)}\le C_{\epsilon}\delta^{-\beta-\epsilon}
\end{align*} 
for some $0\le\beta\le\kappa_{\Omega}(\frac{n-2}{n})$ and every $\epsilon>0$. Then for $1\le p\le \frac{2n}{2n-1}$, $T_{\lambda}$ is bounded on $L^p$ for $\lambda>\kappa_{\Omega}(\frac{2n}{p}-2n+1)+(\alpha/2n+\beta/2)(\frac{2np-2n}{p})$. 

\end{theorem}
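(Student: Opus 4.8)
The plan is to reduce, via a Littlewood--Paley decomposition of the multiplier transverse to $\partial\Omega$, to uniform estimates on dyadic frequency annuli, establish the two endpoint bounds $p=1$ and $p=\tfrac{2n}{2n-1}$ with sharp powers of the annulus width, and interpolate. Write $(1-\rho(\xi))_+^{\lambda}=\sum_{j\ge 0}m_j(\xi)$ with $m_j$ supported in $\{1-\rho(\xi)\sim 2^{-j}\}$, set $\delta:=2^{-j}$, and write $m_j=2^{-j\lambda}\mu_j$ where $\mu_j$ is a symbol of size $O(1)$ supported in that annulus carrying the natural transverse derivative information; let $S^j$ denote the operator with such a symbol. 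The part of the multiplier supported where $\rho(\xi)<\tfrac12$ is a fixed smooth compactly supported multiplier, harmless on every $L^p$, and the $j$-sum converges geometrically, so it suffices to prove, for every $\epsilon>0$ and uniformly over admissible $\mu_j$,
\[\|S^j\|_{L^1\to L^1}\lesssim_{\epsilon}\delta^{-\kappa_{\Omega}-\epsilon},\qquad \|S^j\|_{L^{2n}\to L^{2n}}\lesssim_{\epsilon}\delta^{-\alpha/(2n)-\beta/2-\epsilon}.\]
The first gives $\|T_{\lambda}\|_{L^1\to L^1}<\infty$ for $\lambda>\kappa_{\Omega}$, the threshold $\lambda_0(1)$ in the statement; the second, since $(1-\rho)_+^{\lambda}$ is real and hence $\|T_{\lambda}\|_{L^{p'}\to L^{p'}}=\|T_{\lambda}\|_{L^{p}\to L^{p}}$, gives $\|T_{\lambda}\|_{L^{2n/(2n-1)}\to L^{2n/(2n-1)}}<\infty$ for $\lambda>\tfrac{\alpha}{2n}+\tfrac{\beta}{2}=\lambda_0(\tfrac{2n}{2n-1})$. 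Since $\lambda_0(p):=\kappa_{\Omega}(\tfrac{2n}{p}-2n+1)+(\tfrac{\alpha}{2n}+\tfrac{\beta}{2})\tfrac{2np-2n}{p}$ is affine in $1/p$, Stein analytic interpolation applied on the strip between $\operatorname{Re}z=\kappa_{\Omega}$ and $\operatorname{Re}z=\tfrac{\alpha}{2n}+\tfrac{\beta}{2}$ to the family with symbol $e^{z^{2}}(1-\rho)_+^{z}/\Gamma(z+1)$ (whose vertical-line $L^1$ and $L^{2n}$ bounds follow from the above with only polynomial growth in $\operatorname{Im}z$) then yields boundedness on $L^p$ for $1\le p\le\tfrac{2n}{2n-1}$ whenever $\lambda>\lambda_0(p)$.

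The $L^1$ estimate is a kernel bound as in \cite{sz}. Fix an essentially disjoint cover of $\partial\Omega$ by $N(\Omega,\delta)$ caps $B_{\nu}\in\mathcal B_{\delta}$ and decompose $S^j=\sum_{\nu}S^j_{\nu}$, where $S^j_{\nu}$ has symbol supported on the plank $R_{\nu}$, a rectangle of dimensions $L(B_{\nu})\times\delta$ whose long side is parallel to a supporting line of the $\nu$-th cap, with derivatives costing $L(B_{\nu})^{-1}$ tangentially and $\delta^{-1}$ transversally. Then the convolution kernel of $S^j_{\nu}$ is an $L^1$-normalized bump on the dual plank, so $\|S^j_{\nu}\|_{L^1\to L^1}\lesssim 1$, and summing the $N(\Omega,\delta)\lesssim_{\epsilon}\delta^{-\kappa_{\Omega}-\epsilon}$ terms gives the claim.

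For the $L^{2n}$ estimate I would use the same plank decomposition together with two ingredients. \textbf{(i) $n$-linear bi-orthogonality via the additive energy.} Writing $\|S^jf\|_{2n}^{2n}=\|(S^jf)^n\|_2^2$ and expanding $(S^jf)^n=\sum_{\vec\nu}\prod_{i=1}^{n}S^j_{\nu_i}f$, the Fourier support of $\prod_i S^j_{\nu_i}f$ lies in $R_{\nu_1}+\cdots+R_{\nu_n}$. Choosing the cover to realize (up to $\delta^{-\epsilon}$) the minimum defining $\mathcal E_n(\partial\Omega)$ and splitting it into the $M_0$ subcollections on which the $n$-fold sumsets have overlap at most $M_1$, Plancherel combined with Cauchy--Schwarz accounting for the $\le M_1$-fold overlap of these Fourier supports and generalized H\"older across the $M_0$ subcollections gives
\[\|S^jf\|_{2n}\lesssim (M_0^{2n}M_1)^{1/(2n)}\Big\|\Big(\sum_{\nu}|S^j_{\nu}f|^{2}\Big)^{1/2}\Big\|_{2n}\lesssim_{\epsilon}\delta^{-\alpha/(2n)-\epsilon}\Big\|\Big(\sum_{\nu}|S^j_{\nu}f|^{2}\Big)^{1/2}\Big\|_{2n}.\]
\textbf{(ii) A square function estimate governed by the Nikodym operator.} It remains to prove the $L^{2n}$ analogue of C\'ordoba's square function estimate \cite{cor},
\[\Big\|\Big(\sum_{\nu}|S^j_{\nu}f|^{2}\Big)^{1/2}\Big\|_{2n}\lesssim_{\epsilon}\delta^{-\beta/2-\epsilon}\|f\|_{2n}.\]
By duality this amounts to bounding $\big\|\sum_{\nu}|S^j_{\nu}f|^{2}\big\|_{n}$; pairing against $g\in L^{n/(n-1)}$ one uses that $|S^j_{\nu}f|^{2}$ has Fourier support in $R_{\nu}-R_{\nu}$, so $\int|S^j_{\nu}f|^{2}g$ only feels $g$ through an average over a translate of the dual plank of $R_{\nu}$, whose long side is perpendicular to that of $R_{\nu}$ and whose eccentricity is $\le\delta^{-1}$; the supremum over $\nu$ of these averages is controlled by $M_{\Theta,\delta}g$, since rotating each rectangle by a right angle is measure preserving, sends perpendicular directions back to $\Theta$, and so leaves the maximal $L^p\to L^p$ norm unchanged. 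A suitable Cauchy--Schwarz in $\nu$ — retaining the square function of $f$ on one side and sending the maximal function to the dual exponent $L^{n/(n-1)}$ — then converts the hypothesis $\|M_{\Theta,\delta}\|_{L^{n/(n-1)}\to L^{n/(n-1)}}\lesssim_{\epsilon}\delta^{-\beta-\epsilon}$ into the desired bound. Multiplying (i) and (ii) gives $\|S^j\|_{L^{2n}\to L^{2n}}\lesssim_{\epsilon}\delta^{-\alpha/(2n)-\beta/2-\epsilon}$.

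The principal obstacle is ingredient (ii). The crude chain $|S^j_{\nu}f|^{2}\lesssim M_{R_{\nu}^{*}}(|f|^{2})$ followed by summing in $\nu$ loses a factor $N(\Omega,\delta)^{1/2}\approx\delta^{-\kappa_{\Omega}/2}$, which is far too much; as in C\'ordoba's original argument the estimate must instead be extracted by a genuinely bilinear-in-$f$ argument, so that the only loss that leaks out is the $\delta^{-\beta/2}$ from the maximal bound. Carrying this out for the exponent pair $(L^{2n},\,L^{n/(n-1)})$, together with the bookkeeping in (i) that makes the losses from the additive-energy splitting combine to exactly $(M_0^{2n}M_1)^{1/(2n)}=\Xi_{\mathfrak{B}_{\delta},n}^{1/(2n)}$, is the technical core. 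The remaining points — reducing the multiplier (which for general $\Omega$ is only Lipschitz) to the plank operators $S^j_{\nu}$, the kernel estimates, and checking that the Stein interpolation reproduces $\lambda_0(p)$ exactly — follow the template of \cite{sz}.
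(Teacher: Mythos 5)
Your outline matches the paper's architecture closely: reduce to a fixed dyadic scale $\delta$, prove an $L^1$ kernel bound costing $N(\Omega,\delta)\lesssim_\epsilon \delta^{-\kappa_\Omega-\epsilon}$ (this is Proposition~\ref{propA}), prove an $L^{2n}$ bound costing $\delta^{-\alpha/2n-\beta/2-\epsilon}$ (this is Proposition~\ref{finalprop}), and interpolate. The additive-energy step (i) is also essentially what the paper does: after expanding $\|S^j f\|_{2n}^{2n}$ with Plancherel, split $\mathcal{J}_\delta$ into $C_1(\delta)$ subcollections, use the $\le C_2(\delta)$-fold overlap of the $n$-fold sumsets, and pay $C_1(\delta)^{2n-1}C_2(\delta)\lesssim_\epsilon\delta^{-\alpha-\epsilon}$ overall. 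And your interpolation bookkeeping reproduces $\lambda_0(p)$ exactly; the paper interpolates the fixed-$\delta$ pieces rather than running Stein's theorem on the analytic family $e^{z^2}(1-\rho)_+^z/\Gamma(z+1)$, but this is a cosmetic difference.

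The genuine gap is in step (ii), and you correctly flag it as the technical core, but the mechanism you gesture at is not the right one. Knowing that $|S^j_\nu f|^2$ has Fourier support in $R_\nu-R_\nu$ does not by itself convert $\int |S^j_\nu f|^2 g$ into an average of $g$ over a translate of a dual plank: the integrand is a product, not a bump, and the Fourier support only tells you $\int |S^j_\nu f|^2 g=\int |S^j_\nu f|^2\,(P_{R_\nu-R_\nu}g)$, which is not of the claimed form. The correct device, which is what the paper uses, is the pointwise Jensen bound: factor $T_i=T_iS_i$ where $S_i=\mathcal{F}^{-1}[\psi_i\,\widehat{\cdot}\,]$ is a \emph{smooth fattened cutoff in $\xi_1$ only} (identically $1$ on the support of $\beta_i$), so that $|T_if|^2=|K_i\ast S_if|^2\le \|K_i\|_1\,\bigl(|K_i|\ast|S_if|^2\bigr)$. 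Pairing against $w$ then produces $\int\sum_i|S_if|^2\bigl(|\tilde K_i|\ast w\bigr)\le \int \bigl(\sum_i|S_if|^2\bigr)\sup_i|\tilde T_iw|$, and now two separate ingredients finish: (a) Rubio de Francia's square function theorem for the smooth $\xi_1$-cutoffs $\{S_i\}$ gives $\bigl\|(\sum_i|S_if|^2)^{1/2}\bigr\|_{2n}\lesssim\log(\delta^{-1})\|f\|_{2n}$ (you never actually invoke anything that proves a square-function bound for the raw plank operators $S^j_\nu$, and it is precisely this replacement of plank operators by one-dimensional frequency cutoffs that makes the square function tractable); and (b) the pointwise majorization $\sup_i|\tilde T_iw|\lesssim\log(\delta^{-1})\,\overline{M}_\delta w$ of Proposition~\ref{propB}, combined with $\overline{M}_\delta\le M_{\Theta,\delta}$ and the hypothesis $\|M_{\Theta,\delta}\|_{L^{n/(n-1)}\to L^{n/(n-1)}}\lesssim_\epsilon\delta^{-\beta-\epsilon}$. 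Your remark about rotating by a right angle preserving the $L^p$ norm is fine as far as it goes, but it is beside the point once you use Proposition~\ref{propB}, which already phrases the maximal bound in terms of rectangles with long side of slope in $\Theta$. Without the Jensen bound and the $S_i$/$T_i$ separation, the ``suitable Cauchy--Schwarz in $\nu$'' you invoke does not actually close, and this is precisely the part you acknowledge not having carried out.
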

Note that if $n=2$ we recover Theorem \ref{sz1}. One may check that if $n>2$, $\alpha=0$ and $\beta=\kappa_{\Omega}(\frac{n-2}{n})$ (i.e. $\beta$ is obtained by interpolating C\'{o}rdoba's estimate $\Norm{M_{\Theta, \delta}}_{L^2\to L^2}=O(\delta^{-\epsilon})$ with the trivial $L^1$ estimate $\Norm{M_{\Theta, \delta}}_{L^1\to L^1}=O(\delta^{-\kappa_{\Omega}})$), then Theorem \ref{genthm} gives improved bounds over those stated in Theorem \ref{sz1} in the range $1\le p\le \frac{2n}{2n-1}$. Fix a convex domain $\Omega$, and define 
\begin{align*}
p_{\text{crit}}:=\inf\{p: T_{\lambda}\text{ bounded on }L^p\text{ for all }\lambda>0\}.
\end{align*}
To achieve $p_{\text{crit}}<4/3$ using Theorem \ref{genthm} would require the construction of domains that simultaneously satisfy \textit{both} $\alpha=0$ and $\beta=0$ for some $n>2$. 
\newline
\indent
Finally, in Section \ref{khinsec} we will prove the following lower bounds for $T_{\lambda}$ for general convex domains.

\begin{theorem}\label{khin1}
Let $1\le p\le 2$. Let $\Omega\subset\mathbb{R}^2$ be a convex domain containing the origin, and let $T_{\lambda}$ denote the generalized Bochner-Riesz operator with exponent $\lambda$ associated to $\Omega$. Then $T_{\lambda}$ is unbounded on $L^p(\mathbb{R}^2)$ if $\lambda< 1-\frac{\kappa_{\Omega}}{2}-\frac{1}{p}$. In particular, $p_{\text{crit}}\ge\frac{2}{2-\kappa_{\Omega}}$. 
\end{theorem}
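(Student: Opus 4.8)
Since the multiplier $m=(1-\rho)^\lambda_+$ is real-valued, $T_\lambda$ is self-adjoint, so $\|T_\lambda\|_{L^p\to L^p}=\|T_\lambda\|_{L^{p'}\to L^{p'}}$; it therefore suffices to show that this operator norm is infinite throughout the stated range of $\lambda$. (At the endpoint $p=1$ this is just the statement that $K_\lambda:=m^\vee\notin L^1(\mathbb R^2)$, because for $\lambda>0$ one has $m\in L^\infty$ with compact support, hence $K_\lambda\in C_0(\mathbb R^2)$, and boundedness of $T_\lambda$ on $L^1$ is equivalent to $K_\lambda\in L^1$.) The only information about $\Omega$ that I will use is the defining $\limsup$: for every $\kappa'<\kappa_\Omega$ and $\eta>0$ there is a sequence $\delta\to0$ along which $N(\Omega,\delta)\ge\delta^{-\kappa'+\eta}$, i.e.\ $\partial\Omega$ cannot be covered by fewer than $\delta^{-\kappa'+\eta}$ caps from $\mathcal B_\delta$. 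I will build, for such $\delta$, randomized test functions and run a Khinchin argument.

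\textbf{The construction and the single-cap local model.} Fix $\delta$ from the sequence and a minimal cover of $\partial\Omega$ by caps $B_j=B(p_j,\ell_j,\delta)$, $1\le j\le N$. A dyadic pigeonholing on the tangential lengths of these caps (discarding the $O(1)$ long, nearly flat caps) yields $\gtrsim N$ pairwise essentially disjoint caps of a common tangential length $L$; since they are disjoint and $\partial\Omega$ has finite perimeter, $L\lesssim N^{-1}$, and by convexity the supporting slopes $\theta_j\in\Theta(\Omega)$ are monotone along $\partial\Omega$. For each such $j$, let $g_j$ be a Schwartz function whose Fourier transform is an adapted bump on a slab of tangential width $\sim L$ and normal width $\sim1$ lying just inside $\partial\Omega$ near $p_j$. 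The local model is this: because $\partial\Omega$ lies within $\delta$ of $\ell_j$ over $B_j$, one may straighten $\partial\Omega$ to $\ell_j$ at the cost of an $O(\delta)$ perturbation of the frequency variable $w$ normal to $\ell_j$, after which $(1-\rho)^\lambda_+$ is, up to a smooth elliptic factor, the one-dimensional profile $w_+^\lambda$. Hence $T_\lambda g_j$ is essentially $g_j$ convolved in the $w$-variable with $(w_+^\lambda)^\vee\sim|x_w|^{-1-\lambda}$: whereas $g_j$ decays rapidly off its dual box $Q_j$ (dimensions $L^{-1}\times1$, long side in the direction $\theta_j$), the function $T_\lambda g_j$ develops a slow tail $|T_\lambda g_j(x)|\gtrsim L\,|x_w|^{-1-\lambda}$ along a plate $P_j$ of dimensions $L^{-1}\times\delta^{-1}$ normal to $\ell_j$, the cutoff at $|x_w|\sim\delta^{-1}$ being exactly where the $O(\delta)$ straightening error reaches unit size and the tail disperses. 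Thus in the window $1\ll|x_w|\ll\delta^{-1}$ the function $T_\lambda g_j$ is sizeable while $g_j$ is negligible — this asymmetry is the engine of the lower bound.

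\textbf{Randomization and the square functions.} Put $g=\sum_j\varepsilon_jg_j$ with $\varepsilon_j=\pm1$, and apply Khinchin's inequality: $\mathbb E\|g\|_p^p\sim\bigl\|(\sum_j|g_j|^2)^{1/2}\bigr\|_p^p$ and $\mathbb E\|T_\lambda g\|_p^p\sim\bigl\|(\sum_j|T_\lambda g_j|^2)^{1/2}\bigr\|_p^p$. The $g_j$ are all concentrated near the origin, so the input square function is controlled there and $\|g\|_p$ is governed by the overlap of the boxes $Q_j$. The output square function instead sees the tails of the $T_\lambda g_j$ reaching distance $\delta^{-1}$ in the $\gtrsim N$ distinct normal directions $\theta_j^\perp$; using that convexity makes the plates $P_j$ overlap only in a controlled, Córdoba--Nikodym-type fashion (this is precisely the geometry encoded in the maximal operators $M_{\Theta,\delta}$), one bounds $\bigl\|(\sum_j|T_\lambda g_j|^2)^{1/2}\bigr\|_p$ from below by a quantity which, for $p<2$, is much larger — a square function of many essentially disjoint tails has large $L^p$ norm when $p<2$. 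Carrying out the bookkeeping with $N\gtrsim\delta^{-\kappa'+\eta}$, $L\sim N^{-1}$, and optimizing the slab width gives $\|T_\lambda g\|_p/\|g\|_p\gtrsim\delta^{-c}$ for some $c>0$ whenever $\lambda$ lies below the stated threshold, and letting $\delta\to0$ along the sequence forces $\|T_\lambda\|_{L^p\to L^p}=\infty$; the assertion $p_{\mathrm{crit}}\ge\frac{2}{2-\kappa_\Omega}$ then follows since the threshold is positive exactly for $p<\frac{2}{2-\kappa_\Omega}$. \textbf{The main obstacle} is the combinatorial geometry in this last step: for a general convex domain the caps may be distributed in many ways, and controlling the overlap of the $N$ plates $P_j$ — and thereby extracting the sharp exponent — rests delicately on the monotonicity of the supporting-line directions together with the cap count $N(\Omega,\delta)$; making the straightening model and the persistence of the $|x_w|^{-1-\lambda}$ tail rigorous (in particular that the elliptic factor and the $O(\delta)$ error do not destroy it) is the secondary technical point.
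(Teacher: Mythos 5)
Your proposal shares the broad strategy of the paper (randomized test functions adapted to the Seeger--Ziesler cap decomposition, Khinchine's inequality, interpolation between $L^1$ and $L^2$), but it departs from the paper's argument in three ways, one of which is a genuine gap.

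\textbf{The missing subordination step, and why the ``$w_+^\lambda$-tail'' may not be there.} The paper does not test $T_\lambda$ directly. It first invokes the subordination formula (equation (\ref{subord})) to reduce the problem to bounding $T_\lambda^\delta$ with the \emph{smooth} annulus multiplier $\phi(\delta^{-1}(1-\rho))$; the $\lambda$-dependence enters solely through the resulting inequality $\|T_\lambda^\delta\|_{p\to p}\lesssim\delta^{-\lambda}$. You instead apply $T_\lambda$ directly to a $g_j$ whose Fourier transform is an ``adapted bump on a slab\ldots lying just inside $\partial\Omega$'' and argue that $T_\lambda g_j$ acquires a slow $|x_w|^{-1-\lambda}$ tail because $(1-\rho)^\lambda_+\approx w_+^\lambda$. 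This is exactly where the argument breaks: if $\widehat{g_j}$ is a genuine Schwartz bump adapted to a slab that lies on one side of $\partial\Omega$, then $\widehat{g_j}$ vanishes to infinite order at $w=0$, so $w^\lambda\widehat{g_j}$ is \emph{smooth} there and $T_\lambda g_j$ has rapid, not power, decay --- the cusp of the multiplier is annihilated by the vanishing of the test function. If instead you allow $\widehat{g_j}$ not to vanish at $\partial\Omega$ (so the $w^\lambda$ cusp survives), then $\widehat{g_j}$ has a jump at the boundary of its support, $g_j$ is no longer Schwartz and itself carries a $|x_w|^{-1}$ tail that decays \emph{more slowly} than the output's $|x_w|^{-1-\lambda}$, and the asymmetry you rely on (``$T_\lambda g_j$ sizeable while $g_j$ negligible'') reverses. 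The paper's device avoids this altogether: its $\psi_\delta$ is built from boxes of size $2^{-r}\times 2^{-r}$ centered \emph{on} $\partial\Omega$, and the $\delta$-thin multiplier $\phi(\delta^{-1}(1-\rho))$ shears those boxes into $2^{-r}\times\delta$ slabs without any singularity. The ``spreading out'' of the output to normal scale $\delta^{-1}$, and the resulting loss of random cancellation, is produced by the annulus restriction alone.

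\textbf{Khinchine on the output vs.\ a deterministic lower bound.} You apply Khinchine to both $\|g\|_p$ and $\|T_\lambda g\|_p$. The paper applies Khinchine only to bound $\mathbb E\|\psi_\delta\|_1$ \emph{from above}, and then proves the output lower bound $\|T_\lambda^\delta\psi_\delta\|_1\gtrsim N(\Omega,\delta)(\log\delta^{-1})^{-1}$ \emph{deterministically}, uniformly in the signs. This is accomplished by selecting only every $\lfloor\log\delta^{-1}\rfloor$-th cap, so that the output pieces sit on plates $\{|x\cdot(c_{j_i},\gamma(c_{j_i}))|\le C\delta^{-1},\ |x\cdot(1,\gamma'(c_{j_i}))|\le C2^r\}$ which are essentially disjoint and on which $|T_\lambda^\delta\psi_\delta|\ge 2^{-r-10}\delta$ regardless of the randomization. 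You would need to establish an analogous plate-disjointness fact in order to compute your output square function from below, and at the moment you invoke ``Córdoba--Nikodym-type overlap control'' for a \emph{lower} bound, when it is precisely non-overlap that you need. The paper's deterministic route is cleaner and sidesteps the need for any quantitative estimate on the output square function.

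\textbf{The final bookkeeping is not carried out, and the threshold needs a second look.} You write ``carrying out the bookkeeping \ldots gives $\|T_\lambda g\|_p/\|g\|_p\gtrsim\delta^{-c}$ whenever $\lambda$ lies below the stated threshold'' without doing the computation. When one does it (as the paper does: interpolating $\|\psi_\delta\|_1\lesssim N^{1/2}$, $\|\psi_\delta\|_2\lesssim N^{1/2}2^{-r}$, $\|T_\lambda^\delta\psi_\delta\|_1\gtrsim N$, $\|T_\lambda^\delta\psi_\delta\|_2\gtrsim(N2^{-r}\delta)^{1/2}$ and combining with $\|T_\lambda^\delta\|_{p\to p}\lesssim\delta^{-\lambda}$), the constraint that emerges is $\lambda\ge\frac{\kappa_\Omega}{2}+\frac1p-1$, not $\lambda\ge 1-\frac{\kappa_\Omega}{2}-\frac1p$ as written in the theorem; the quantity $1-\frac{\kappa_\Omega}{2}-\frac1p$ is positive for $p>\frac{2}{2-\kappa_\Omega}$, not $p<\frac{2}{2-\kappa_\Omega}$, so the ``in particular'' clause only follows from the corrected exponent. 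A careful execution of your plan would have flagged this, and an argument that merely reproduces the asserted threshold without derivation does not.
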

The proof will involve testing the operator on randomly defined functions, using Khinchine's inequality and Plancherel to estimate the $L^1$ and $L^2$ operator norms, respectively, and then interpolating.
\newline
\indent
We now give an overview of the layout of this paper. In Section \ref{prelim} we give useful preliminaries about convex domains in $\mathbb{R}^2$ and state some background results from \cite{sz}. In Section \ref{constrsec}, we construct the convex domains which we will later prove satisfy the statement of Theorem \ref{2ndmainprop}, and prove some results about the $n$-additive energy of their boundaries. In Section \ref{thmproof} we prove Theorem \ref{genthm}, which gives $L^p$ bounds for $T_{\lambda}$ as a consequence of certain conditions on the $n$-additive energy of $\partial\Omega$ and range of $q$ for which $\Theta(\Omega)$ is $q$-sparse. We also prove Theorem \ref{2ndmainprop} as a consequence of Theorem \ref{genthm}. In Section \ref{khinsec} we prove Theorem \ref{khin1}, which gives lower $L^p$  bounds on $T_{\lambda}$ for general convex domains with a given value of $\kappa_{\Omega}$. In Section \ref{concsec} we discuss some open questions which follow naturally from the results of this paper. 

\begin{remark}
All logarithms in this paper will be assumed to be base $2$, unless otherwise noted.
\end{remark}

\section{Preliminaries on convex domains in $\mathbb{R}^2$}\label{prelim}
In this section we give some useful background about convex domains in $\mathbb{R}^2$. All results in this section can be found in \cite{sz}, but we include them here for the sake of completeness. However, we will omit all proofs in this section, and the reader is encouraged to refer to \cite{sz} for proofs. 
\newline
\indent
Let $\Omega\subset\mathbb{R}^2$ be a bounded, open convex set containing the origin. Assume that $\Omega$ contains the ball of radius $4$ centered at the origin. Since $\Omega$ is bounded, there is an integer $M>0$ such that
\begin{align}\label{8est}
\{\xi:\,|\xi|\le 4\}\subset\Omega\subset\overline{\Omega}\subset\{\xi:\,|\xi|<2^M\}.
\end{align}
The following lemma is straightforward and can be proved using only elementary facts about convex functions.
\begin{customlemma}{C}[\cite{sz}]\label{elemlemma}
Suppose that $\Omega$ is a convex domain satisfying (\ref{8est}). Then $\partial\Omega\cap\{x:\,-1\le x_1\le 1,\,x_2<0\}$ can be parametrized by
\begin{align}
t\mapsto (t, \gamma(t)),\qquad{}-1\le t\le 1,
\end{align}
where
\begin{enumerate}
\item
\begin{align}
1<\gamma(t)<2^M,\qquad{} -1\le t\le 1.
\end{align}
\item $\gamma$ is a convex function on $[-1, 1]$, so that the left and right derivatives $\gamma_L^{\prime}$ and $\gamma_R^{\prime}$ exist everywhere in $(-1, 1)$ and
\begin{align}
-2^{M-1}\le\gamma_R^{\prime}(t)\le\gamma_L^{\prime}(t)\le 2^{M-1}
\end{align}
for $t\in [-1, 1]$. The functions $\gamma_L^{\prime}$ and $\gamma_R^{\prime}$ are decreasing functions; $\gamma_L^{\prime}$ and $\gamma_R^{\prime}$ are right continuous in $[-1, 1]$.
\item Let $\ell$ be a supporting line through $\xi\in\partial\Omega$ and let $n$ be an outward normal vector. Then
\begin{align}
|\left<\xi, n\right>|\ge 2^{-M}|\xi|.
\end{align}
\end{enumerate}
\end{customlemma}

\subsection*{Decomposition of $\partial\Omega$} As another preliminary ingredient, we need the decomposition of $\partial\Omega\cap \{x:\,-1\le x_1\le 1,\,x_2<0\}$ introduced in \cite{sz}. This decomposition allows us to write $\partial\Omega$ as a disjoint union of pieces on which $\partial\Omega$ is sufficiently ``flat", where the number of pieces in the decomposition is closely related to the covering numbers $N(\Omega, \delta)$. We inductively define a finite sequence of increasing numbers
\begin{align*}
\mathfrak{A}(\delta)=\{a_0, \ldots, a_Q\}
\end{align*}
as follows. Let $a_0=-1$, and suppose $a_0, \ldots, a_{j-1}$ are already defined. If
\begin{align}\label{case1}
(t-a_{j-1})(\gamma_L^{\prime}(t)-\gamma_R^{\prime}(a_{j-1}))\le\delta\text{ for all }t\in (a_{j-1}, 1])
\end{align}
and $a_{j-1}\le 1-2^{-M}\delta$, then let $a_j=1$. If (\ref{case1}) holds and $a_{j-1}>1-2^{-M}\delta$, then let $a_j=a_{j-1}+2^{-M}\delta$. If (\ref{case1}) does not hold, define
\begin{align*}
a_j=\inf\{t\in (a_{j-1}, 1]:\,(t-a_{j-1})(\gamma_L^{\prime}(t)-\gamma_R^{\prime}(a_{j-1}))>\delta\}.
\end{align*}
Now note that (\ref{case1}) must occur after a finite number of steps, since we have $|\gamma_L^{\prime}|, |\gamma_R^{\prime}|\le 2^{M-1}$, which implies that $|t-s||\gamma_L^{\prime}(t)-\gamma_R^{\prime}(s)|<\delta$ if $|t-s|<\delta 2^{-M}$. Therefore this process must end at some finite stage $j=Q$, and so it gives a sequence $a_0<a_1<\cdots<a_Q$ so that for $j=0, \ldots, Q-1$
\begin{align}\label{left} 
(a_{j+1}-a_j)(\gamma_L^{\prime}(a_{j+1})-\gamma_R^{\prime}(a_j))\le\delta,
\end{align}
and for $0\le j<Q-1$,
\begin{align}\label{right}
(t-a_j)(\gamma_L^{\prime}(t)-\gamma_R^{\prime}(a_j))>\delta\qquad\text{if }t>a_{j+1}.
\end{align}
For a given $\delta>0$, this gives a decomposition of 
\begin{align*}
\partial\Omega\cap\{x:\,-1\le x_1\le 1, x_2<0\}
\end{align*} 
into pieces
\begin{align*}
\bigsqcup_{n=0, 1, \ldots, Q-1}\{x\in\partial\Omega:\, x_1\in [a_n, a_{n+1}]\}.
\end{align*}
The number $Q$ in (\ref{left}) and (\ref{right}) is also denoted by $Q(\Omega, \delta)$. Let $R_{\theta}$ denote rotation by $\theta$ radians. The following lemma relates the numbers $Q(R_{\theta}\Omega, \delta)$ to the covering numbers $N(\Omega, \delta)$.
\begin{customlemma}{D}[\cite{sz}]\label{covlemma}
There exists a positive constant $C_M$ so that the following statements hold.
\begin{enumerate}
\item $Q(\Omega, \delta)\le C_M\delta^{-1/2}$.
\item $0\le\kappa_{\Omega}\le 1/2$.
\item For any $\theta$, 
\begin{align*}
Q(R_{\theta}\Omega, \delta)\le C_MN(\Omega, \delta)\log(2+\delta^{-1}).
\end{align*}
\item For $\nu=1, \ldots, 2^{2M}$ let $\theta_{\nu}=\frac{2\pi\nu}{2^{2M}}$. Then
\begin{align*}
C_M^{-1}N(\Omega, \delta)\le\sum_{\nu}Q(R_{\theta_{\nu}}\Omega, \delta)\le C_MN(\Omega, \delta)\log(2+\delta^{-1}).
\end{align*}
\end{enumerate}
\end{customlemma}

Finally, we state two results from \cite{sz} that we will need later in our proof of Theorem \ref{2ndmainprop}. The former is an $L^1$ estimate for the kernels of generalized Bochner-Riesz multipliers using a decomposition analogous to the standard decomposition of the (spherical) Bochner-Riesz multipliers into annuli. The latter is an $L^1$ kernel estimate corresponding to a finer decomposition of the generalized Bochner-Riesz multipliers associated with the decomposition of $\partial\Omega$ introduced above, as well as a pointwise majorization of a maximal function associated with this decomposition by a related Nikodym-type maximal function.
\begin{customprop}{E}[\cite{sz}]\label{propA}
Let $\Omega$ be a convex domain containing the origin. Let $\beta$ be a $C^2$ function supported on $(-1/2, 1/2)$ so that
\begin{align*}
|\beta^k(t)|\le 1, \qquad k=0, \ldots, 4.
\end{align*}
Let 
\begin{align*}
m_{\delta, \lambda}(\xi)=\delta^{\lambda}\beta\big(\frac{\delta^{-1}}{2}(1-\rho(\xi))\big).
\end{align*}
Then there is some $c>0$ such that for every $\delta>0$ sufficiently small,
\begin{align*}
\Norm{\mathcal{F}^{-1}[m_{\delta, \lambda}\widehat{f}]}_{L^1(\mathbb{R}^2)}\lesssim \delta^{\lambda}\log(\delta^{-1})^{c}N(\Omega, \delta)\Norm{f}_{L^1(\mathbb{R}^2)}.
\end{align*}
\end{customprop}

\begin{customprop}{F}[\cite{sz}]\label{propB}
Let $\Omega$ be a convex domain satisfying (\ref{8est}) and let $b\in C_0^{\infty}$ be supported in the sector $S=\{\xi:\,|\xi_1|\le 2^{-10M}|\xi_2|,\,\xi_2<0\}$. Let $\alpha\mapsto(\alpha, \gamma(\alpha))$ be the parametrization of $\partial\Omega\cap S$ as a graph, as in Lemma \ref{elemlemma}. For any subinterval $I$ of $[-1/2, 1/2]$ denote by $I^{\ast}$ the interval with the same center and with length $\frac{4}{3}|I|$. For $\delta<1/2$ let $\mathfrak{J}_{\delta}$ be the set of open subintervals $I$ of $[-1, 1]$ with the property that $|I|\ge 2^{-5M}\delta$ and 
\begin{align}\label{leftest}
(t-s)(\gamma_L^{\prime}(t)-\gamma_R^{\prime}(s))\le 2^{5}\delta\text{ for }s<t,\,s,t,\in I^{\ast}.
\end{align}
Let $\mathfrak{B}$ be the set of $C^2$ functions $\beta$ supported on $(-1/2, 1/2)$ so that
\begin{align*}
|\beta^{(k)}(t)|\le 1, \qquad k=0, \ldots, 4.
\end{align*}
Suppose $I=(c_I-|I|/2, c_I+|I|/2)\in\mathfrak{J}_{\delta}$. Let
\begin{align}\label{weirdmult}
m_{\beta_1, \beta_2, I}(\xi)=b(\xi)\beta_1(\frac{\delta^{-1}}{2}(1-\rho(\xi)))\beta_2(|I|^{-1}(\xi_1-c_I))
\end{align}
where $\beta_1, \beta_2\in\mathfrak{B}$. Then for any $\beta_1, \beta_2\in\mathfrak{B}$ and $I\in\mathfrak{J}_{\delta}$, 
\begin{align}\label{multest}
\Norm{F^{-1}[m_{\beta_1, \beta_2, I}]}_1\lesssim\log(\delta^{-1}).
\end{align}
Let
\begin{align*}
\mathfrak{M}_{\delta}f(x)=\sup_{\beta_1, \beta_2\in\mathfrak{B}}\sup_{I\in\mathfrak{J}_{\delta}}\big||\mathcal{F}^{-1}[m_{\beta_1, \beta_2, I}]|\ast f(x)\big|
\end{align*}
and let
\begin{align*}
\overline{M}_{\delta}f(x)=\sup_{x\in R\in \mathcal{C}_{\delta}}\frac{1}{|R|}\int_R|f(y)|\,dy,
\end{align*}
where 
\begin{align*}
\mathcal{C}_{\delta}=\{R:\, R\text{ is a rectangle of dimensions }\delta\times (a_{j+1}-a_j)\\
\text{ with longer side of slope }\gamma_L^{\prime}(a_j),\text{ where }a_j, a_{j+1}\in\mathfrak{A}(\delta)\}.
\end{align*}
Then
\begin{align}\label{maxest}
\mathfrak{M}_{\delta}f(x)\lesssim \log(\delta^{-1})\overline{M}_{\delta}f(x).
\end{align}

\end{customprop}

\section{Construction of $\Omega$ and some algebraic disjointness lemmas}\label{constrsec}
We will now construct domains which we will show satisfy the statement of Theorem \ref{2ndmainprop}. The idea is to construct a convex domain $\Omega$ such that the kernels of the pieces of the multiplier obtained by decomposing the multiplier as in Proposition \ref{propB} exhibit a high degree of cancellation with each other. In \cite{sz}, it was shown that for abitrary convex domains that the supports of the convolution of pairs of pieces of the multiplier were more or less disjoint. This was used to prove the endpoint $p=4/3$ estimate using duality and an $L^4$ argument similar to C\'{o}rdoba's treatment of the (spherical) Bochner-Riesz means in $\mathbb{R}^2$ (see \cite{cor}). Here, we construct a domain so that the supports of the $m$-fold convolution of $m$-tuples of pieces of the mutiplier are more or less disjoint, which we will use to prove an $L^{2m}$ estimate in the same vein as in \cite{cor} and \cite{sz}.
\newline
\indent
Before constructing $\Omega$, we will need the following basic lemma.

\begin{lemma}\label{disjlemma}
For any integer $N>10$ and any integer $m\ge 1$, there exists a collection $\mathcal{I}$ of $N$ disjoint subintervals of $[-\frac{1}{2}, \frac{1}{2}]$ each of size $\frac{N^{-(2m-1)}}{3m}$ so that 
\begin{align*}
\{I_1+I_2+\cdots+I_{m}\}_{I_1,\ldots, I_{m}\in\mathcal{I}}
\end{align*} 
is a pairwise disjoint collection. 
\end{lemma}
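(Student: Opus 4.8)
The plan is to construct the intervals explicitly so that the sums $I_1 + \cdots + I_m$ are forced apart by choosing the left endpoints to lie in a Sidon-type set. Concretely, I would place the intervals at positions governed by base-$N$ digit expansions: take $I_j$ to be the interval of length $\ell := \frac{N^{-(2m-1)}}{3m}$ with left endpoint $x_j = -\tfrac14 + \tfrac{j}{2N}$ for $j = 0, 1, \ldots, N-1$ (say), so that consecutive intervals are separated by a gap of $\tfrac{1}{2N} \gg \ell$. This places all $N$ intervals in $[-\tfrac12,\tfrac12]$ and makes them pairwise disjoint with a lot of room to spare. The point is that $\ell$ is much smaller than the spacing $\tfrac{1}{2N}$, in fact smaller than $\tfrac{1}{2N}$ divided by any power of $N$ up to $N^{2m-2}$.

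Next I would analyze when two sums can collide. Suppose $I_{a_1} + \cdots + I_{a_m}$ and $I_{b_1} + \cdots + I_{b_m}$ intersect, where (after sorting) $a_1 \le \cdots \le a_m$ and $b_1 \le \cdots \le b_m$. Each Minkowski sum $I_{a_1}+\cdots+I_{a_m}$ is an interval of length $m\ell$ whose left endpoint is $S_a := \sum_k x_{a_k} = -\tfrac m4 + \tfrac{1}{2N}\sum_k a_k$. So the two sum-intervals intersect only if $|S_a - S_b| \le m\ell = \tfrac{N^{-(2m-1)}}{3} < \tfrac{1}{2N}$. But $S_a - S_b = \tfrac{1}{2N}(\sum_k a_k - \sum_k b_k)$, and $\sum_k a_k - \sum_k b_k$ is an integer. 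Since $|S_a - S_b| < \tfrac{1}{2N}$ forces this integer to be zero, we get $\sum_k a_k = \sum_k b_k$. This shows the sums are "separated" as soon as the multisets $\{a_k\}$ and $\{b_k\}$ have different total sums — but it does \emph{not} yet give that the multisets are equal, which is what genuine pairwise disjointness of the \emph{collection} $\{I_1 + \cdots + I_m\}$ requires (two collisions with the same digit-sum but different multisets would be a problem).

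To fix this, the right move is to use a genuine $B_m$ (Sidon-type of order $m$) set of integers instead of $\{0,1,\ldots,N-1\}$: choose integers $0 \le c_0 < c_1 < \cdots < c_{N-1}$ with the property that all $m$-fold sums $c_{a_1} + \cdots + c_{a_m}$ (over nondecreasing tuples) are distinct, and such that $c_{N-1} \le C N^{?}$. The classical construction here is to take $c_i$ to be the integer whose base-$(2m)$ expansion is the base-$2$ expansion of $i$ — i.e. $c_i = \sum_t \epsilon_t (2m)^t$ where $i = \sum_t \epsilon_t 2^t$ with $\epsilon_t \in \{0,1\}$. Then an $m$-fold sum of the $c_i$'s has all base-$(2m)$ digits bounded by $m < 2m$, so there is no carrying, and the sum determines the multiset of summands. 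Here $c_{N-1} \le (2m)^{\lceil \log_2 N\rceil} \le (2m)^{2}\cdot N^{\log_2(2m)}$; this is polynomial in $N$ but the exponent $\log_2(2m)$ may exceed what the length budget $\ell = \frac{N^{-(2m-1)}}{3m}$ allows. However since we only need the spacing to beat $m \ell$, and $m\ell \sim N^{-(2m-1)}$, I would instead set the interval left endpoints to $x_i = -\tfrac14 + \tfrac{c_i}{2 c_{N-1}}$ (rescaling the $B_m$ set into $[0,1]$), so consecutive endpoints differ by at least $\tfrac{1}{2 c_{N-1}}$; then two $m$-fold sum-intervals intersect only if the difference of the corresponding $m$-fold $c$-sums, divided by $2c_{N-1}$, is at most $m\ell$, i.e. the integer difference is at most $2 c_{N-1} m \ell$. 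The remaining arithmetic check is that $2 c_{N-1} m \ell < 1$, which with $c_{N-1} \le (2m)^2 N^{\log_2 2m}$ and $\ell = \frac{N^{-(2m-1)}}{3m}$ holds for all $N > 10$ since $\log_2(2m) \le 2m - 2$ for $m \ge 2$ (and the case $m=1$ is trivial). Thus the integer difference is $0$, so the $m$-fold $c$-sums agree, so by the $B_m$ property the multisets of indices agree, so the two sum-intervals are literally equal — giving the desired pairwise disjointness of the collection.

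The main obstacle I anticipate is precisely this bookkeeping: one must be careful that "the sums of digit-representations are distinct" genuinely upgrades to "the Minkowski sum intervals are disjoint as a collection," which needs the $B_m$ (not merely $B_1$) property, and one must verify the length $\frac{N^{-(2m-1)}}{3m}$ is small enough relative to the minimal gap of whatever $B_m$ set is used — the exponent $2m-1$ is generous enough that a polynomially-sized $B_m$ set fits, but the constant $3m$ and the "$>10$" threshold need to be checked honestly. An alternative, cleaner route avoiding the rescaling is to directly take $x_i = -\tfrac14 + c_i \cdot \eta$ where $c_i$ is the no-carry base-$(2m)$ construction above and $\eta = \tfrac{N^{-(2m-1)}}{2m \cdot (2m)^{\lceil \log_2 N \rceil}}$ is chosen so that $x_{N-1} \le -\tfrac14 + \text{(tiny)} < \tfrac12$ and the minimal gap $\eta$ already exceeds $m\ell$ when $\ell$ is taken equal to, say, $\eta$ itself — but then one must re-examine whether $\ell$ can be taken as large as the prescribed $\frac{N^{-(2m-1)}}{3m}$; I would reconcile this by noting we are free to take the intervals \emph{shorter} than stated if needed, but the statement demands exactly that length, so the rescaled-$B_m$-set approach is the one I would commit to.
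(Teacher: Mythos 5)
Your approach is a genuinely different route than the paper's: you aim for an explicit construction using a $B_m$-set of integer left endpoints, so that an overlap of two $m$-fold Minkowski sums forces equal integer sums of the $c_i$'s, which the $B_m$ property then upgrades to equal multisets and hence identical sum-intervals. The paper instead gives a greedy, measure-theoretic argument: supposing $M<N$ intervals with the desired property have already been chosen, the set of positions where adding another interval would create a collision (the union of the $M^{2m-1}$ intervals containing $(I_1+\cdots+I_m)-(I_{m+1}+\cdots+I_{2m-1})$, each of width at most $\tfrac{2}{3}N^{-(2m-1)}$) has total length at most $\tfrac{2}{3}(M/N)^{2m-1}<\tfrac{2}{3}$, leaving room to append an $(M+1)$-st interval, and one concludes by induction on $M$. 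Your scheme is sound in spirit, but there is a genuine error in the $B_m$-set you propose. The integers $c_i=\sum_t\epsilon_t(2m)^t$, obtained by reading the binary digits of $i$ in base $2m$, are \emph{not} a $B_m$-set for $m\ge 2$. The no-carry observation is correct and tells you, for each digit position $t$, how many of $a_1,\ldots,a_m$ have a $1$ in binary place $t$; but knowing the vector of column-sums of a $0/1$ matrix does not determine the multiset of rows. Already at $m=2$ one has $c_1+c_2=1+4=5=0+5=c_0+c_3$, so the distinct index multisets $\{1,2\}$ and $\{0,3\}$ yield the same two-fold sum, and the corresponding Minkowski sum-intervals would coincide exactly, violating pairwise disjointness. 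As written, your construction does not prove the lemma.

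Your plan can be repaired by substituting a genuine $B_m$-set of $N$ integers inside $[0,CN^m]$ (for instance Bose--Chowla over a prime $q$ with $N\le q\le 2N$ via Bertrand's postulate, giving $c_{N-1}\le(2N)^m$), after which the rescaling inequality $2c_{N-1}m\ell<1$ becomes a bound of the form $N^{1-m}\lesssim 1$, comfortably true for $m\ge 2$ and $N>10$, with $m=1$ trivial. However the numerology you actually carried out, built around $c_{N-1}\lesssim N^{\log_2(2m)}$ for the flawed digit construction, would need to be redone around the correct bound $c_{N-1}\lesssim N^m$. The paper's avoidance argument sidesteps all of this bookkeeping, which is one reason to prefer it; your route, once the $B_m$-set is fixed, has the virtue of being fully explicit.
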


\begin{proof}[Proof of Lemma \ref{disjlemma}]
Let $M$ be an integer strictly less than $N$. We will show that if $\mathcal{I}_M$ is a collection of $M$ disjoint subintervals of $[-\frac{1}{2}, \frac{1}{2}]$ each of size $\frac{N^{-(2m-1)}}{3m}$ satisfying the algebraic disjointness condition of the lemma, then there is a collection $\mathcal{I}_{M+1}$ of $M+1$ disjoint subintervals of $[-\frac{1}{2},\frac{1}{2}]$ of size $\frac{N^{-(2m-1)}}{3m}$ satisfying the same condition. 
\newline
\indent
Indeed, suppose that such a collection $\mathcal{I}_M$ exists. Suppose $I_1, \ldots, I_{m}\in\mathcal{I}_M$. Then given any collection of $m-1$ intervals $I_{m+1}, \ldots I_{2m-1}\in\mathcal{I}_M$, there is an interval $I_{(I_1, \ldots, I_{2m-1})}\subset[-\frac{1}{2}, \frac{1}{2}]$ of width no larger than $\frac{2N^{-(2m-1)}}{3}$ such that
\begin{align*}
(I_1+\cdots+I_{m})-(I_{m+1}+\cdots+I_{2m-1})\subset I_{(I_1, \ldots, I_{2m-1})}.
\end{align*}
Now define
\begin{align*}
E=\bigcup_{(I_1, \ldots, I_{2m-1})\in(\mathcal{I}_M)^{2m-1}}I_{(I_1, \ldots, I_{2m-1})}.
\end{align*}
Then since $\text{card}((\mathcal{I}_M)^{2m-1})=M^{2m-1}$, we have $|E|\le\frac{2}{3}\cdot(\frac{M}{N})^{2m-1}$. Since $M<N$, we have $|[-\frac{1}{2}, \frac{1}{2}]\setminus E|\ge\frac{1}{3}.$ Since $E$ is a union of no more than $M^{2m-1}$ disjoint intervals, the average gap length between consecutive disjoint intervals in $E$ is at least $\frac{1}{6}M^{-7}\ge\frac{1}{6}N^{-7}$. Thus there exists an interval $I$ of length $\frac{N^{-(2m-1)}}{3m}$ such that $I\subset [-\frac{1}{2}, \frac{1}{2}]\setminus E$. Now set $\mathcal{I}_{M+1}=\mathcal{I}_M\cup\{I\}$. Then $\mathcal{I}_{M+1}$ is a collection of $M+1$ disjoint subintervals of $[-\frac{1}{2}, \frac{1}{2}]$ each of size $\frac{N^{-(2m-1)}}{3m}$ satisfying the algebraic disjointness condition of the lemma. By induction on $M$, the proof is complete.
\end{proof}

\subsection*{Construction of $\Omega$}
We now proceed to construct the convex domain $\Omega$ which we will show satisfies the statement of Theorem \ref{2ndmainprop} with $\kappa_{\Omega}=\frac{1}{4m-2}$. It will then be easy to explain how to modify the construction to produce a domain which satisfies the statement of Theorem \ref{2ndmainprop} with $\kappa_{\Omega}\in [0, \frac{1}{4m-2})$. 
\newline
\indent
For each integer $k\ge 0$, we inductively define a collection $\mathcal{I}_k$ of disjoint subintervals of $[-\frac{1}{2}, \frac{1}{2}]$. We set $\mathcal{I}_0=\{[-\frac{1}{2}, \frac{1}{2}]\}$. For each $k\ge 0$, we define $\mathcal{I}_{k+1}$ to be a collection of $2^{k+4}\cdot\text{card}(\mathcal{I}_k)$ subintervals of intervals in $\mathcal{I}_k$ obtained by applying Lemma \ref{disjlemma} with $N=2^{k+4}$ to each interval of $\mathcal{I}_k$. More precisely, if we let $\tilde{\mathcal{I}}_k$ be a collection of $N$ disjoint subintervals of $[-\frac{1}{2}, \frac{1}{2}]$ each of size $\frac{N^{-(2m-1)}}{3m}$ given by Lemma \ref{disjlemma} with $N=2^{k+4}$, then for each $I\in\tilde{\mathcal{I}}_k$, let $\tilde{\mathcal{I}}_{k, I}$ be the rescaling of $\tilde{\mathcal{I}}_k$ to $I$, that is, if the endpoints of $I$ are $a$ and $b$ with $a<b$, set $\tilde{\mathcal{I}}_{k, I}=a+(b-a)\tilde{\mathcal{I}}_k$. Then set 
\begin{align*}
\mathcal{I}_{k+1}=\bigcup_{I\in\mathcal{I}_k}\tilde{\mathcal{I}}_{k, I}.
\end{align*}
For each $k$, define $S_k$ to be the set of endpoints of intervals in $\mathcal{I}_k$, and define $\Omega_k$ to be the convex polygon with vertices at 
\begin{align*}
\{(x-\frac{1}{2}, x^2-8):\, x\in S_k\}\cup\{(-8, 0); (-8, 8); (8, 0); (8, 8)\}.
\end{align*}
Let $\Omega$ be the convex domain so that $\partial\Omega$ is the uniform limit of $\{\partial\Omega_k\}$ as $k\to\infty$. Note that $\Omega$ satisfies (\ref{8est}) with $M=10$.

\begin{lemma}\label{ultlemma}
Let $\Omega$ be constructed as described previously. For every $\delta>0$, there exist integer constants $C_1(\delta)$, $C_2(\delta)$ with $C_1(\delta)=O(\delta^{-\epsilon})$ and $C_2(\delta)=O(\delta^{-\epsilon})$ for every $\epsilon>0$ so that if $\mathcal{J}_{\delta}$ denotes the collection of $Q(\Omega, \delta)$ essentially disjoint intervals obtained from the decomposition of $[-1, 1]$ as described in Section \ref{prelim}, then we can write
\begin{align*}
\mathcal{J}_{\delta}=\bigcup_{l=1}^{C_1(\delta)}\mathcal{J}_{\delta, l}
\end{align*}
such that for each $l$, no point of $\mathbb{R}$ is contained in more than $C_2(\delta)$ of the sets
\begin{align*}
\{I_1+\cdots +I_m\}_{I_1, \ldots, I_m\in\mathcal{J}_{\delta, l}}.
\end{align*}
In particular, this implies that $\mathcal{E}_m(\partial\Omega)=0$.
\end{lemma}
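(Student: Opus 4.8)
The plan is to analyze the intervals produced by the Seeger–Ziesler decomposition $\mathcal{J}_\delta$ and match them up with the Cantor-type intervals $\mathcal{I}_k$ from the construction. First I would observe that the construction is built so that, at scale $\delta$, the relevant ``flat pieces'' of $\partial\Omega$ correspond (up to bounded overlap and up to logarithmic factors in the count) to the intervals of $\mathcal{I}_{k(\delta)}$ for an appropriate generation $k(\delta)$; more precisely, since the vertices of $\Omega$ lie on the parabola $x\mapsto(x-\tfrac12,x^2-8)$ with $x$ ranging over $S_k=\bigcup\{\text{endpoints of }\mathcal{I}_k\}$, the derivative gaps $\gamma_L'(t)-\gamma_R'(s)$ across a component interval are comparable to $|t-s|$ near the parabolic part, so the defining inequality \eqref{left}–\eqref{right} of the decomposition forces each $I\in\mathcal{J}_\delta$ to have length comparable to $\delta^{1/2}$ and to be essentially contained in (or to straddle boundedly many of) the intervals of the generation $\mathcal{I}_k$ for which the intervals of $\mathcal{I}_k$ have length $\approx\delta^{1/2}$. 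One needs to check that the generation sizes $\ell_k$ (the common length of intervals in $\mathcal{I}_k$) decrease geometrically enough that every $\delta$ falls ``between'' two consecutive generations with only a $\log(\delta^{-1})$ loss in cardinality — this is where the choice $N=2^{k+4}$ and $\kappa_\Omega=\tfrac{1}{4m-2}$ enters, and it is essentially the same bookkeeping that shows $\kappa_\Omega=\tfrac{1}{4m-2}$.

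Next, having reduced to generation $k$, I would transport the algebraic disjointness property of $\mathcal{I}_k$ through this correspondence. By construction $\mathcal{I}_k=\bigcup_{I\in\mathcal{I}_{k-1}}\tilde{\mathcal I}_{k-1,I}$, and within each parent interval $I$ the children $\tilde{\mathcal I}_{k-1,I}$ form a rescaled copy of the family from Lemma \ref{disjlemma}, hence $\{J_1+\cdots+J_m\}$ is pairwise disjoint for $J_i$ ranging over the children of a \emph{fixed} parent. To handle sums of intervals coming from different parents, I would iterate this down the tree: color the intervals of $\mathcal{I}_k$ according to the tuple of ``which child'' they are at each of the $k$ levels of the tree (there are $\prod_{j<k}2^{j+4}$ possible colorings, which is $O(\delta^{-\epsilon})$ after one checks $2^{k^2/2}\approx\delta^{-\epsilon}$ given $\ell_k\approx\delta^{1/2}$ is only polynomially small — wait, this needs care; the cleaner route is to use at each level a bounded number of colors times a controlled factor). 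More carefully: at the top level split $\mathcal{I}_k$ into the $\le \mathrm{card}(\mathcal{I}_1)=2^4$ groups according to which interval of $\mathcal{I}_1$ the interval descends from; within each such group, recurse. The total number of color classes $C_1(\delta)$ is then a product of the branching numbers over the levels, $\prod_{j=0}^{k-1}2^{j+4}=2^{4k+\binom{k}{2}}$, and since $2^{-k^2/2}\approx\ell_k\approx\delta^{1/2}$ forces $k\approx\sqrt{\log\delta^{-1}}$, this product is $2^{O(k^2)}=2^{O(\log\delta^{-1})}$... which is \emph{polynomial}, not sub-polynomial. The resolution is that one does not need all levels: because Lemma \ref{disjlemma} already gives pairwise disjointness of $m$-fold sums \emph{within} a single generation's children, and the children of distinct parents at the \emph{finest} level live in disjoint parent intervals (so their $m$-fold sums automatically separate unless all $m$ come from parents that are ``close''), only boundedly many levels near scale $\delta$ actually contribute non-trivial overlap; the coarser levels contribute genuinely disjoint blocks. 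So $C_1(\delta)$ and $C_2(\delta)$ end up being $2^{O(k)}=\delta^{-O(1/\sqrt{\log\delta^{-1}})}=O(\delta^{-\epsilon})$.

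The main obstacle, and the step I would spend the most care on, is exactly this overlap-counting across generations: making rigorous the claim that an $m$-fold sum $I_1+\cdots+I_m$ with the $I_j\in\mathcal{J}_\delta$ can have large multiplicity only when the $I_j$ are forced (by the geometry of the tree and the spacing estimates from Lemma \ref{disjlemma}) to lie in a bounded cluster of sibling intervals, so that the within-generation disjointness of Lemma \ref{disjlemma} — together with a bounded-overlap coloring using $O(k)=O(\sqrt{\log\delta^{-1}})$ colors — controls everything. Concretely I would (i) fix the generation $k$ with $\ell_k\ge 2^{-5M}\delta>\ell_{k+1}$ and show every $I\in\mathcal{J}_\delta$ meets $O(1)$ intervals of $\mathcal{I}_k$; (ii) reduce to bounding overlaps of $\{J_1+\cdots+J_m\}$ for $J_i\in\mathcal{I}_k$; (iii) partition $\mathcal{I}_k$ by the address of its level-$(k-1)$ parent, apply Lemma \ref{disjlemma}'s disjointness within each parent block for the ``diagonal'' contributions, and use a separate geometric/counting argument (distinct parents are $\ell_{k-1}$-separated while each $J_i$ has length $\ell_k\ll\ell_{k-1}$, and the number of parents is $2^{k+3}$) to bound cross-parent overlaps by $O(1)$ after a further $O(1)$-coloring; (iv) collect the colors — one family of $O(1)$ colors per level, over $O(k)$ levels — into $C_1(\delta)=2^{O(k)}$ classes, and read off $C_2(\delta)=2^{O(k)}$ from the per-class multiplicity bound. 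Since $k\lesssim\sqrt{\log\delta^{-1}}$, both are $O(\delta^{-\epsilon})$ for every $\epsilon>0$, whence $\mathcal{E}_m(\partial\Omega)=\limsup_\delta\frac{\log\Xi_{m,\delta}}{\log\delta^{-1}}=0$.
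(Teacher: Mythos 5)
Your proposal correctly identifies the key reduction to the Cantor structure and correctly diagnoses that a naive tree-address coloring of $\mathcal{I}_{K(\delta)}$ gives $C_1(\delta)=2^{O(K(\delta)^2)}=\delta^{-O(1)}$, which is too big. But the fix you then propose is not justified, and it is genuinely different from — and weaker than — what the paper actually does.

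The problematic step is (iii)–(iv): you claim that, after partitioning $\mathcal{I}_k$ by which level-$(k-1)$ parent each interval descends from, the ``cross-parent'' overlaps can be controlled by $O(1)$ with a further $O(1)$-coloring, so that accumulating ``$O(1)$ colors per level over $O(k)$ levels'' gives $C_1(\delta)=2^{O(k)}$. This does not work as stated. The parent $m$-tuples in $\mathcal{I}_{k-1}$ are \emph{not} a Lemma~\ref{disjlemma} family (they arise from $k-1$ levels of subdivision), so the number of parent $m$-tuples whose sum hits a given point is not $O(1)$ after $O(1)$ colorings; it grows with $k$. Any coloring scheme that caps per-class multiplicity at $O(1)$ would necessarily require $C_1(\delta)$ at least as large as the total multiplicity, and you never compute that multiplicity, so the claim $C_1=2^{O(k)}$ is unsupported. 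You also drop the ``gap'' intervals (the components of $[-\tfrac12,\tfrac12]\setminus\bigcup\mathcal{I}_k$), which do appear in $\mathcal{J}_\delta$ and must be handled separately: your reduction to $\mathcal{I}_k$ alone does not cover $\mathcal{J}_\delta$.

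The paper's argument sidesteps both issues. First, it reduces $\mathcal{J}_\delta$ to $\mathcal{I}_{K(\delta)}'$, the \emph{full} partition of $[-\tfrac12,\tfrac12]$ by $S_{K(\delta)}$ (Cantor intervals \emph{and} gaps), and then uses only $K(\delta)+1$ colors, grouping intervals by the generation at which they become a gap (with $\mathcal{I}_{K(\delta)}$ itself as one class). Second, and this is the key idea your plan misses, it does \emph{not} try to make per-class multiplicity $O(1)$: it proves directly by induction on $k$ that no point lies in more than $(m!)^k$ of the sets $\{I_1+\cdots+I_m\}_{I_j\in\mathcal{I}_k}$. The induction step uses exactly the pairwise disjointness from Lemma~\ref{disjlemma}: with a fixed $m$-tuple of parents (all the same length), the $m$-fold sums of children form a translated, rescaled copy of $\{J_1+\cdots+J_m\}_{J_j\in\tilde{\mathcal{I}}_k}$, so a given point is hit by at most $m!$ ordered child tuples; combined with at most $(m!)^{k-1}$ parent tuples, the bound propagates. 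Since $K(\delta)\lesssim\sqrt{\log\delta^{-1}}$, both $C_1(\delta)=K(\delta)+1$ and $C_2(\delta)\lesssim 2^{K(\delta)}(m!)^{K(\delta)}$ are $O(\delta^{-\epsilon})$. Replacing your steps (iii)–(iv) with this induction and adding the treatment of the gap classes would repair the argument.
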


\begin{proof}[Proof of Lemma \ref{ultlemma}]
Given $\delta>0$, let $K(\delta)$ be the largest integer such that each interval in $\mathcal{I}_{K(\delta)}$ has size $\ge\delta^{1/2}$. For each integer $k\ge 0$, let $\mathcal{I}_{k}^{\prime}$ denote the set of essentially disjoint subintervals corresponding to the decomposition of $[-1/2, 1/2]$ given by the partition $S_k$ of $[-1/2, 1/2]$. Then for every $\delta>0$, each element of $\mathcal{J}_{\delta}$ intersects no more than $10$ elements of $\mathcal{I}_{K(\delta)}^{\prime}$, and each element of $\mathcal{I}_{K(\delta)}^{\prime}$ intersects no more than $10$ elements of $\mathcal{J}_{\delta}$. Moreover, all but at most $10$ elements of $\mathcal{J}_{\delta}$ are covered by a union of elements of $\mathcal{I}_{K(\delta)}^{\prime}$. It thus suffices to prove the lemma with $\mathcal{J}_{\delta}$ replaced by $\mathcal{I}_{K(\delta)}^{\prime}$. 
\newline
\indent
It is easy to compute that $K(\delta)\lesssim(\log(\delta^{-1}))^{1/2}=O(\delta^{-\epsilon})$ for every $\epsilon>0$. We organize $\mathcal{I}_{K(\delta)}^{\prime}$ into $K(\delta)+1$ disjoint subcollections as follows. Set $(\mathcal{I}_{K(\delta)}^{\prime})_0=\mathcal{I}_{K(\delta)}$. Set $(\mathcal{I}_{K(\delta)}^{\prime})_1=\mathcal{I}_{1}^{\prime}\setminus\mathcal{I}_1$ and for $1<k\le K(\delta)-1$ inductively define 
\begin{align*}
(\mathcal{I}_{K(\delta)}^{\prime})_{k+1}=\mathcal{I}_k^{\prime}\setminus(\mathcal{I}_k\cup(\mathcal{I}_{K(\delta)}^{\prime})_{k}).
\end{align*} 
Then
\begin{align*}
\mathcal{I}_{K(\delta)}^{\prime}=\bigsqcup_{k=0}^{K(\delta)}(\mathcal{I}_{K(\delta)}^{\prime})_k.
\end{align*}
It is also easy to see that for $k>1$, every element of $(\mathcal{I}_{K(\delta)}^{\prime})_k$ is a subset of an element of $\mathcal{I}_{k-1}$. In fact, we can think of $(\mathcal{I}_{K(\delta)}^{\prime})_k$ for $k>0$ as the ``gaps" leftover after subdividing $\mathcal{I}_{k-1}$.
\newline
\indent
We now show that for any $k\ge 0$, no point of $\mathbb{R}$ is contained in more than $(m!)^k$ of the sets $\{I_1+\cdots+I_m\}_{I_1, \ldots, I_m\in\mathcal{I}_k}$. We prove this by induction on $k$. The base case is trivial. Suppose that this is true for a given $k$. Fix $x\in\mathbb{R}$, and suppose there are intervals $I_1, \ldots, I_m\in\mathcal{I}_{k+1}$ such that $x\in (I_1+\cdots+I_m)$. Then there are intervals $I_{m+1}, \ldots, I_{2m}\in\mathcal{I}_{k}$ such that $I_1\subset I_{m+1}$, $I_2\subset I_{m+2}$, \ldots, $I_{m}\subset I_{2m}$. Let us count how many $m$-tuples $(I_1^{\prime}, \ldots, I_m^{\prime})$ there are satisfying $x\in I_1^{\prime}+\cdots+I_m^{\prime}$ and $I_1^{\prime}\subset I_{m+1}, I_2^{\prime}\subset I_{m+1}^{\prime}, \ldots, I_{m}\subset I_{2m}^{\prime}$. After applying an appropriate translation and dilation, this is the same as the number of ordered $m$-tuples of intervals whose sum contains a given point, where the intervals are restricted to a collection that satisfy the properties stated in Lemma \ref{disjlemma} for some $N$. But for such a collection the number of ordered $m$-tuples is simply $m!$. By the inductive hypothesis, the number of choices of intervals $I_{m+1}, \ldots, I_{2m}\in\mathcal{I}_{k}$ is $\le (m!)^k$, and therefore the number of choices of intervals $I_1, \ldots, I_m$ is $\le (m!)^{k+1}$.
\newline
\indent
The above argument shows that no point of $\mathbb{R}$ is contained in more than $(m!)^{K(\delta)}$ of the sets $\{I_1+\cdots+I_m\}_{I_1, \ldots, I_m\in (\mathcal{I}_{K(\delta)}^{\prime})_0}$. Moreover, for every $0\le k\le K(\delta)$ no point of $\mathbb{R}$ is contained in more than $(m!)^{K(\delta)}$ of the sets $\{I_1+\cdots+I_m\}_{I_1, \ldots, I_m\in \mathcal{I}_{k}}$. Fix $k>0$, and also fix $x\in\mathbb{R}$. Given $I_1, \ldots, I_m\in\mathcal{I}_{k-1}$ with $x\in (I_1+\cdots+I_m)$, there are at most $2^{k+10}$ choices of intervals $I_{m+1}, \ldots, I_{2m}\in (\mathcal{I}_{K(\delta)}^{\prime})_k$ such that $I_1\subset I_{m+1}, I_2\subset I_{m+2}, \ldots, I_m\subset I_{2m}$. It follows that $x$ is contained in no more than $2^{K(\delta)+10}\cdot (m!)^{K(\delta)}$ of the sets $\{I_1+\cdots+I_m\}_{I_1, \ldots, I_m\in(\mathcal{I}_{K(\delta)}^{\prime})_k}$. 
\newline
\indent
As noted previously, $K(\delta)\lesssim(\log(\delta^{-1}))^{1/2}$, so $2^{K(\delta)+10}\cdot (m!)^{K(\delta)}=O(\delta^{-\epsilon})$ for every $\epsilon>0$. Thus we have proven the lemma with $C_1(\delta)=K(\delta)+1$ and $C_2(\delta)=2^{K(\delta)+10}\cdot 24^{K(\delta)}$.
\end{proof}

\begin{lemma}\label{kappalemma}
Let $\Omega$ be constructed as described previously. Then $\kappa_{\Omega}=\frac{1}{4m-2}$.
\end{lemma}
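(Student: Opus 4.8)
The plan is to pin down the behaviour of the covering number $N(\Omega,\delta)$ as $\delta\to0$ and then read off $\kappa_\Omega$ arithmetically. Write $n_k:=\operatorname{card}(\mathcal I_k)$ and let $\ell_k$ be the common length of the intervals of $\mathcal I_k$. Since each interval of $\mathcal I_k$ spawns $2^{k+4}$ children of proportional length $(2^{k+4})^{-(2m-1)}/(3m)$, one has $\log_2 n_k=\sum_{j=0}^{k-1}(j+4)=\tfrac12(k^2+7k)$ and $\log_2\ell_k^{-1}=\sum_{j=0}^{k-1}\bigl[(2m-1)(j+4)+\log_2(3m)\bigr]=(2m-1)\cdot\tfrac12(k^2+7k)+k\log_2(3m)$. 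For $\delta>0$ let $K(\delta)$ be the largest $k$ with $\ell_k\ge\delta^{1/2}$; as already observed in the proof of Lemma~\ref{ultlemma}, $K(\delta)\lesssim(\log\delta^{-1})^{1/2}$, so $2^{K(\delta)}=\delta^{-o(1)}$. I would also record the structure of the limiting curve: with $C:=\bigcap_k\overline{\bigcup\mathcal I_k}$ (a Cantor set having $n_k$ intervals of length $\ell_k$ at stage $k$), the graph $\gamma$ of the lower part of $\partial\Omega$ coincides on $C$ with the parabola $P$ carrying the vertices of the $\Omega_k$ — because over any stage-$k$ interval the curve lies between $P$ and its chord, hence within $O(\ell_k^2)$ of $P$ — while $\gamma$ is affine on each of the (countably many) maximal intervals complementary to $C$, since over such an interval $\Omega_{k'}$ is eventually polygonal with no interior vertex. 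The goal is then to show $N(\Omega,\delta)=n_{K(\delta)}\,\delta^{\pm o(1)}$.

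For the upper bound $N(\Omega,\delta)\le n_{K(\delta)}\,\delta^{-o(1)}$ I would cover the lower arc of $\partial\Omega$ directly. Over each of the $\lesssim n_{K(\delta)+1}$ gaps appearing by stage $K(\delta)+1$, the curve is a line segment and the cap $B(p,\ell,\delta)$ built from the supporting line carrying that segment covers it; gaps of higher stage are tiny (width $\ll\delta^{1/2}$) and get absorbed below. It remains to cover $\gamma|_C\subset P$: partition $[-1,0]$ into $\sim\delta^{-1/2}$ intervals of length $\sim\delta^{1/2}$, keep only those meeting $C$ (each of which, since $P$ has curvature $\sim1$, lies within $O(\delta)$ of a suitable supporting line and so, together with the tiny gaps near it, is covered by $O(1)$ caps), and note that, because $C$ consists of only $n_{K(\delta)}$ intervals of length $\ell_{K(\delta)}\ge\delta^{1/2}$, the number of partition intervals meeting $C$ is $\lesssim n_{K(\delta)}\,\ell_{K(\delta)}/\delta^{1/2}\le n_{K(\delta)}\,\ell_{K(\delta)}/\ell_{K(\delta)+1}=n_{K(\delta)}\,\delta^{-o(1)}$. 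With $O(1)$ further caps for the polygonal part of $\partial\Omega$ and $n_{K(\delta)+1}=n_{K(\delta)}2^{K(\delta)+4}=n_{K(\delta)}\delta^{-o(1)}$, this gives the bound.

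For the lower bound I would use Lemma~\ref{covlemma}(4): retaining only the trivial rotation in the sum gives $N(\Omega,\delta)\ge Q(\Omega,\delta)/\bigl(C_M\log(2+\delta^{-1})\bigr)$, and I claim $Q(\Omega,\delta)\ge\tfrac12 n_{K(\delta)}$. Indeed, if two points $u_1<u_2$ of $C$ lie in one member $[a_j,a_{j+1}]$ of the decomposition $\mathfrak A(\delta)$, then, since $\gamma=P$ on $C$ and $\gamma$ is convex, the one-sided derivatives satisfy $\gamma_L'(a_{j+1})-\gamma_R'(a_j)\ge P'(u_2)-P'(u_1)=2(u_2-u_1)$, so (using also $u_2-u_1\le a_{j+1}-a_j$) the defining inequality $(a_{j+1}-a_j)(\gamma_L'(a_{j+1})-\gamma_R'(a_j))\le\delta$ forces $u_2-u_1\le(\delta/2)^{1/2}$. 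Hence the members of $\mathfrak A(\delta)$ meeting $C$ cover $C$ by sets of diameter $<\delta^{1/2}$, and since $\delta^{1/2}\le\ell_{K(\delta)}$ each such set meets at most two of the $n_{K(\delta)}$ stage-$K(\delta)$ intervals of $C$, so there are at least $\tfrac12 n_{K(\delta)}$ of them.

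Combining the two bounds, $\dfrac{\log N(\Omega,\delta)}{\log\delta^{-1}}=\dfrac{\log n_{K(\delta)}}{\log\delta^{-1}}+o(1)$, and for $\delta$ with $K(\delta)=k$ we have $\ell_{k+1}<\delta^{1/2}\le\ell_k$, i.e. $2\log\ell_k^{-1}\le\log\delta^{-1}<2\log\ell_{k+1}^{-1}$. Since $\log n_k\sim\tfrac12 k^2$ while $\log\ell_k^{-1}\sim\log\ell_{k+1}^{-1}\sim(2m-1)\tfrac12 k^2$, both $\dfrac{\log n_k}{2\log\ell_k^{-1}}$ and $\dfrac{\log n_k}{2\log\ell_{k+1}^{-1}}$ converge to $\dfrac1{2(2m-1)}=\dfrac1{4m-2}$ as $k\to\infty$, so $\kappa_\Omega=\dfrac1{4m-2}$. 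The substantive work is the two covering estimates — in particular the interplay between the curvature of $P$ and the fast-branching gap structure of $C$ — which are carried out in the spirit of Proposition~\ref{propB} and the proof of Lemma~\ref{ultlemma}; the concluding computation is then routine.
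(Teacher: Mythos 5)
Your argument follows the same overall strategy as the paper's: identify $K(\delta)$, compute $n_{K(\delta)}=\operatorname{card}(\mathcal{I}_{K(\delta)})\approx\delta^{-1/(4m-2)}$, relate it to $N(\Omega,\delta)$ via Lemma~\ref{covlemma}, and take the limsup. You fill in the covering bounds on both sides, which the paper's proof leaves implicit (it simply asserts the chain $N\approx Q\approx\operatorname{card}(\mathcal J_\delta)\approx\operatorname{card}(\mathcal I_{K(\delta)})$, resting on the observations from the proof of Lemma~\ref{ultlemma}); your upper-bound covering argument, the explicit structure of $\gamma$ (parabolic on $C$, affine on gaps), and the closing arithmetic with $\log n_k$ and $\log\ell_k^{-1}$ are all correct.

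One step needs repair. In the lower bound you assert that if $u_1<u_2$ lie in $C\cap[a_j,a_{j+1}]$ then $\gamma_L'(a_{j+1})-\gamma_R'(a_j)\ge P'(u_2)-P'(u_1)$. This rests on $\gamma_R'(a_j)\le P'(u_1)$ and $\gamma_L'(a_{j+1})\ge P'(u_2)$, which follow from $\gamma_R'(a_j)\le\gamma_L'(u_1)\le P'(u_1)$ and $\gamma_L'(a_{j+1})\ge\gamma_R'(u_2)\ge P'(u_2)$ --- but these require $a_j<u_1$ and $u_2<a_{j+1}$. If $u_1=a_j$ the inequality reverses: $\gamma\ge P$ with contact at $u_1$ forces $\gamma_R'(u_1)\ge P'(u_1)$ (e.g.\ if a gap lies to the right of $u_1$, the chord across it has slope strictly greater than $P'(u_1)$), and then $\gamma_R'(a_j)\le P'(u_1)$ fails. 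The same issue arises with $u_2=a_{j+1}$; indeed, two contact points joined by a single chord across a gap produce a decomposition interval with $\gamma_L'(a_{j+1})-\gamma_R'(a_j)=0$ regardless of $u_2-u_1$. The remedy is to run the argument only for contact points strictly interior to $[a_j,a_{j+1}]$: if $[a_j,a_{j+1}]\cap C$ meets five or more stage-$K(\delta)$ intervals, pick contact points $u_2',u_4'$ in the second and fourth of these; both are strictly interior, so $u_4'-u_2'\le(\delta/2)^{1/2}$, yet the third stage-$K(\delta)$ interval lies entirely between them, forcing $u_4'-u_2'\ge\ell_{K(\delta)}\ge\delta^{1/2}$, a contradiction. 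Hence each member of $\mathfrak A(\delta)$ meets at most four stage-$K(\delta)$ intervals, which still yields $Q(\Omega,\delta)\gtrsim n_{K(\delta)}$ and the stated conclusion, with only the inessential constant $\tfrac12$ degraded.
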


\begin{proof}[Proof of Lemma \ref{kappalemma}]
Let $K(\delta)$ be defined as in the proof of Lemma \ref{ultlemma}. $K(\delta)$ is the greatest integer such that 
\begin{align*}
\prod_{n=1}^{K(\delta)}2^{-(2m-1)(n+4)}\ge\delta^{1/2}.
\end{align*}
It follows that 
\begin{align*}
\text{card}(\mathcal{I}_{K(\delta)+1})=\prod_{n=1}^{K(\delta)+1}2^{(n+4)}\ge\delta^{-1/(4m-2)},
\end{align*}
and hence
\begin{align*}
\delta^{-1/(4m-2)}2^{-K(\delta)-4}\le \text{card}(\mathcal{I}_{K(\delta)})\le\delta^{-1/(4m-2)}.
\end{align*}
As noted previously, $2^{-K(\delta)}=O(\delta^{-\epsilon})$ for every $\epsilon>0$, and hence by Lemma \ref{covlemma}, 
\begin{multline*}
\kappa_{\Omega}=\limsup_{\delta\to 0}\frac{\log(N(\Omega, \delta))}{\log(\delta^{-1})}=\limsup_{\delta\to 0}\frac{Q(\Omega, \delta)}{\log(\delta^{-1})}
\\
=\limsup_{\delta\to 0}\frac{\text{card}(\mathcal{J}_{\delta})}{\log(\delta^{-1})}=\limsup_{\delta\to 0}\frac{\text{card}(\mathcal{I}_{K(\delta)})}{\log(\delta^{-1})}=\frac{1}{4m-2}.
\end{multline*}
\end{proof}

\begin{remark}\label{genremark}
Let $\kappa\in [0, \frac{1}{4m-2})$. We now describe how we may modify the construction of $\Omega$ so that it still satisfies the hypotheses of Lemma \ref{ultlemma}, but $\kappa_{\Omega}=\kappa$. Obviously, we may replace Lemma \ref{disjlemma} with the weaker statement that there exists $N^{c}$ (instead of $N$) disjoint subintervals satisfying the hypotheses of Lemma \ref{disjlemma} with $0\le c<1$. If we repeat the same construction of $\Omega$ described previously except applying this weaker version of Lemma \ref{disjlemma} instead, we will produce a domain $\Omega$ with $\kappa_{\Omega}=\kappa$ if we choose $c$ appropriately. Verification of the details is left to the reader.
\end{remark}

\section{Proof of Theorem \ref{genthm}}\label{thmproof}

To prove Theorem \ref{genthm} in the case that $\lambda>0$, it only remains to prove the following proposition.

\begin{proposition}\label{finalprop}
Let $\Omega$ be a convex domain in $\mathbb{R}^2$ containing the origin and let $\Theta$ be its associated set of directions. Let $n\ge 2$ be an integer. Suppose that $\mathcal{E}_n(\partial\Omega)=\alpha$ for some integer $0\le\alpha\le n\kappa_{\Omega}$ and that $\Norm{M_{\Theta, \delta}}_{L^{\frac{n}{n-1}}(\mathbb{R}^2)\to L^{\frac{n}{n-1}}(\mathbb{R}^2)}\le C_{\epsilon}\delta^{-\beta-\epsilon}$ for some $0\le\beta\le\kappa_{\Omega}(\frac{n-2}{n})$ and every $\epsilon>0$. Then if $m_{\delta, \lambda}$ is as in the statement of Proposition \ref{propA}, there is a constant $C(\delta)=O(\delta^{-\epsilon})$ for every $\epsilon>0$ such that 
\begin{align}\label{87est}
\Norm{\mathcal{F}^{-1}[m_{\delta, \lambda}\widehat{f}]}_{L^{\frac{2n}{2n-1}}(\mathbb{R}^2)}\lesssim\delta^{\lambda}C(\delta)\delta^{-\frac{\alpha}{2n}-\frac{\beta}{2}}\Norm{f}_{L^{\frac{2n}{2n-1}}(\mathbb{R}^2)}.
\end{align}

\end{proposition}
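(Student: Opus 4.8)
The plan is to prove \eqref{87est} by duality combined with an $L^{2n}$ estimate, following the strategy of C\'ordoba's $L^4$ argument for the classical Bochner--Riesz means and the $L^4$ argument in \cite{sz}, but now iterated $n$-fold. By duality, \eqref{87est} is equivalent to the $L^{2n}$ bound
\begin{align*}
\Norm{\mathcal{F}^{-1}[m_{\delta, \lambda}\widehat{g}]}_{L^{2n}(\mathbb{R}^2)}\lesssim\delta^{\lambda}C(\delta)\delta^{-\frac{\alpha}{2n}-\frac{\beta}{2}}\Norm{g}_{L^{2n}(\mathbb{R}^2)}.
\end{align*}
After a rotation and a partition of unity (using Lemma \ref{elemlemma} and the rotation-invariance built into Lemma \ref{covlemma}), it suffices to work with a single sector $S$ as in Proposition \ref{propB} and to establish the corresponding bound for the pieces $m_{\beta_1,\beta_2,I}$, $I\in\mathfrak{J}_\delta$, reassembled over $I$. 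Write $T_I g = \mathcal{F}^{-1}[m_{\beta_1,\beta_2,I}\widehat{g}]$, so that the full operator is $\sum_{I\in\mathfrak{J}_\delta} T_I$ (times $\delta^\lambda$, up to the harmless $\beta$-cutoffs). The goal becomes
\begin{align*}
\Big\Vert \sum_{I\in\mathfrak{J}_\delta} T_I g\Big\Vert_{L^{2n}}^{2n} = \Big\Vert \Big(\sum_I T_I g\Big)^{n}\Big\Vert_{L^2}^2 \lesssim C(\delta)^{2}\delta^{-\alpha-n\beta}\Norm{g}_{L^{2n}}^{2n}.
\end{align*}

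The first key step is to expand $\big(\sum_I T_I g\big)^n = \sum_{I_1,\dots,I_n} T_{I_1}g\cdots T_{I_n}g$ and exploit Fourier support. The function $T_{I_j}g$ has Fourier support in a $\delta$-neighborhood (in the $\rho$-direction) of the arc of $\partial\Omega$ over $I_j$; thus $\widehat{T_{I_1}g\cdots T_{I_n}g}$ is supported in (a neighborhood of) $A_{I_1}+\cdots+A_{I_n}$, where $A_I$ denotes the relevant cap. Here is where the $n$-additive energy enters: by the definition of $\mathcal{E}_n(\partial\Omega)=\alpha$ together with Lemma \ref{ultlemma}-type bookkeeping (more precisely the hypothesis that $\Xi_{\delta,n}\lesssim\delta^{-\alpha-\epsilon}$), we may split $\mathfrak{J}_\delta$ into $M_0 \lesssim \delta^{-\alpha/(2n)-\epsilon}$ subfamilies $\mathfrak{J}_{\delta,k}$ such that within each subfamily the sumsets $A_{I_1}+\cdots+A_{I_n}$ have bounded overlap $M_1\lesssim\delta^{-\epsilon}$ (recall $\Xi_{\delta,n}=M_0^{2n}M_1$). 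By the triangle inequality in $L^{2n}$ we lose a factor $M_0\lesssim\delta^{-\alpha/(2n)-\epsilon}$ and reduce to a single $k$; then Plancherel and the bounded-overlap property turn $\Vert(\sum_{I\in\mathfrak{J}_{\delta,k}}T_I g)^n\Vert_{L^2}^2$ into (essentially) $\sum_{I_1,\dots,I_n\in\mathfrak{J}_{\delta,k}}\Vert T_{I_1}g\cdots T_{I_n}g\Vert_{L^2}^2$, up to the $\delta^{-\epsilon}$ loss.

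The second key step is to bound the diagonal-type sum $\sum_{I_1,\dots,I_n}\Vert T_{I_1}g\cdots T_{I_n}g\Vert_{L^2}^2$. Using $\Vert h_1\cdots h_n\Vert_{L^2}^2 \le \prod_j \Vert h_j\Vert_{L^{2n}}^2$ is too lossy; instead one argues as in \cite{cor}: by Plancherel and the geometry of the caps, $\Vert \prod_j T_{I_j}g\Vert_{L^2}^2$ is controlled by $\prod_j \Vert T_{I_j}g\Vert_{L^2 \to}$-type quantities after one uses the kernel estimate \eqref{multest} (so each $T_{I_j}$ is bounded on $L^1$, hence by interpolation on every $L^p$, with norm $\lesssim\log(\delta^{-1})$) together with the pointwise domination \eqref{maxest} of the maximal operator $\mathfrak{M}_\delta$ by $\overline{M}_\delta$, and then by the Nikodym-type operator $M_{\Theta,\delta}$ whose $L^{n/(n-1)}$-norm is $\lesssim\delta^{-\beta-\epsilon}$ by hypothesis. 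Concretely, one writes $|T_{I_j}g|\lesssim \log(\delta^{-1})\, \overline{M}_\delta(g)$ does not quite suffice for an $L^{2n}$ estimate with the right power of $\delta$; rather one distributes: $\Vert\prod_j T_{I_j}g\Vert_{L^2}\le \Vert T_{I_1}g\Vert_{L^{2n}}\cdot\Vert\prod_{j\ge2}M_{\Theta,\delta}g\Vert$-style estimates, interpolating the trivial $L^\infty$ bound on $\mathfrak{M}_\delta$ with the $L^{n/(n-1)}$ bound, to obtain a net factor $\delta^{-n\beta/2}$ after taking the square. Summing over the (essentially disjoint, bounded-overlap) families and unwinding, one arrives at the claimed bound with $C(\delta)$ absorbing all powers of $\log(\delta^{-1})$ and the $\delta^{-\epsilon}$ losses.

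The main obstacle, I expect, is the bookkeeping in the second step: combining the $n$-fold H\"older/interpolation argument so that exactly the power $\delta^{-n\beta/2}$ (and no more) comes out, while simultaneously keeping the additive-energy loss at $\delta^{-\alpha/(2n)}$ rather than a larger power. One has to be careful that the $M_0^{2n}$ in the definition of $\Xi_{\delta,n}$ is matched correctly against the $2n$-th power in $\Vert\cdot\Vert_{L^{2n}}^{2n}$ so that only one power of $M_0$ survives per copy, and that the overlap constant $M_1$ only contributes a $\delta^{-\epsilon}$ loss. A secondary technical point is justifying the reduction to a single sector and the interface between the interval decomposition $\mathfrak{J}_\delta$ of Proposition \ref{propB} and the covering $\mathfrak{B}_\delta$ underlying the definition of $\mathcal{E}_n$ — these differ by bounded-overlap factors (as in the proof of Lemma \ref{ultlemma}), which again only costs $\delta^{-\epsilon}$.
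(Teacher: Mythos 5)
Your first half matches the paper's argument almost exactly: duality to $L^{2n}$, reduction to a single sector, $n$-fold expansion, and then splitting $\mathfrak{J}_\delta$ into $\lesssim\delta^{-\alpha/(2n)-\epsilon}$ subfamilies with bounded overlap of sumsets (noting the balance between the exponent $2n$ in the definition of $\Xi_{\delta,n}$ and the $2n$-th power of the norm), followed by Plancherel. That bookkeeping is correct.

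The gap is in your ``second key step.'' You correctly observe that the naive H\"older bound $\Vert h_1\cdots h_n\Vert_{L^2}\le\prod\Vert h_j\Vert_{L^{2n}}$ is too lossy, but the alternative you describe --- ``distributing'' via $\Vert T_{I_1}g\Vert_{L^{2n}}$ and interpolating $\mathfrak M_\delta$ between $L^\infty$ and $L^{n/(n-1)}$ --- does not close. The issue is that you still have to sum over $I_1$, and without a square-function estimate the sum $\sum_{I_1}\Vert T_{I_1}g\Vert_{L^{2n}}^{2n}$ costs a full power of the cardinality $Q(\Omega,\delta)\approx\delta^{-\kappa_\Omega}$, which is unaffordable. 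The paper's resolution is different: bound $\sum_{i_1,\ldots,i_n}|T_{i_1}f\cdots T_{i_n}f|^2$ by $(\sum_i|T_if|^2)^n$, dualize the $L^n$-norm of $\sum_i|T_if|^2$ against $w\in L^{n/(n-1)}$, and then apply C\'ordoba's pointwise trick $|T_if|^2\le\Vert K_i\Vert_1\,|K_i|\ast|S_if|^2$ (with $S_i$ a smooth projection onto a slightly enlarged frequency interval and $\tilde T_i$ the operator with kernel $|K_i|$). This transfers the weight to $\sup_i|\tilde T_iw|$, whose $L^{n/(n-1)}$ norm is controlled by \eqref{maxest} and the hypothesis on $M_{\Theta,\delta}$. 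The remaining factor $\big\Vert(\sum_i|S_if|^2)^{1/2}\big\Vert_{2n}$ is then controlled by Rubio de Francia's square-function theorem for arbitrary families of intervals, which gives only a $\log(\delta^{-1})$ loss; this is the essential ingredient you do not mention and without which the argument does not terminate with an acceptable power of $\delta$. Also note that the maximal bound is used \emph{directly} on the dual side, not by $L^\infty$--$L^{n/(n-1)}$ interpolation as you suggest.

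So: right scaffolding, but the core of step two --- the $L^n$ duality pairing, the C\'ordoba-style kernel pointwise bound feeding $\mathfrak M_\delta$, and above all the use of Rubio de Francia --- is missing, and what you sketch in its place would not produce the required power of $\delta$.
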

Interpolating Proposition \ref{finalprop} with Proposition \ref{propA} gives the result of Theorem \ref{genthm} for $\lambda>0$.

\begin{proof}[Proof of Proposition \ref{finalprop}]
By duality, to prove (\ref{87est}) it suffices to prove
\begin{align}\label{8est2}
\Norm{\mathcal{F}^{-1}[m_{\delta, \lambda}\widehat{f}]}_{L^{2n}(\mathbb{R}^2)}\lesssim\delta^{\lambda}C(\delta)\delta^{-\frac{\alpha}{2n}-\frac{\beta}{2}}\Norm{f}_{L^{2n}(\mathbb{R}^2)}.
\end{align}
Using an appropriate partition of unity and rotation invariance, it in fact suffices to show that if $b\in C_0^{\infty}$ is as in the statement of Proposition \ref{propB}, then
\begin{align}\label{8est3}
\Norm{\mathcal{F}^{-1}[b\cdot m_{\delta, \lambda}\widehat{f}]}_{L^{2n}(\mathbb{R}^2)}\lesssim\delta^{\lambda}C(\delta)\delta^{-\frac{\alpha}{2n}-\frac{\beta}{2}}\Norm{f}_{L^{2n}(\mathbb{R}^2)}.
\end{align}
Let $\mathcal{J}_{\delta}$ denote the collection of $Q(\Omega, \delta)$ essentially disjoint intervals obtained from the decomposition of $[-1, 1]$ as described in Section \ref{prelim}. For each $I=(\alpha_0, \alpha_1)\in\mathcal{J}_{\delta}$, set $B(I)$ to be a rectangle that has one side parallel to $(1, \gamma^{\prime}(\alpha_0))$, contains $\text{supp}(b\cdot m_{\delta, \lambda})\cap\{x:\,x_1\in I\}$, and such that its $1/2$-dilate is contained in $\text{supp}(b\cdot m_{\delta, \lambda})\cap\{x:\,x_1\in I\}$. Since $\mathcal{E}_n(\partial\Omega)=\alpha$, there are constants $C_1(\delta)$ and $C_2(\delta)$ such that $C_1(\delta)^{2n}C_2(\delta)=O(\delta^{-\alpha-\epsilon})$ for every $\epsilon>0$, and such that we may write $J_{\delta}=\bigcup_{l=1}^{C(\delta)}\mathcal{J}_{\delta, l}$ so that for each $l$, no point of $\mathbb{R}^2$ is contained in more than $C_2(\delta)$ of the sets 
\begin{align*}
\{B(I_1)+\cdots+B(I_{n})\}_{I_j\in\mathcal{J}_{\delta, l}}.
\end{align*}
Now let $\mathfrak{J}_{\delta}$ be defined as in the statement of Proposition \ref{propB}, and let $\{\beta_i\}$ be a partition of unity of $[-\frac{1}{4}, \frac{1}{4}]$ satisfying
\begin{enumerate}
\item $\sum_i\beta_i$ is supported in $(-\frac{1}{2}, \frac{1}{2})$, 
\item Every $\beta_i$ is of the form $\beta(|I|^{-1}(\cdot-c_I))$ for some $\beta\in\mathfrak{B}$ and for some interval $I\in\mathfrak{J}_{\delta}$ with center $c_I$,
\item Each interval in $\mathcal{J}_{\delta}$ intersects the support of at most $(\log(\delta^{-1}))^2$ of the $\beta_i$'s,
\item If the support of $\beta_i$ intersects some $I\in\mathcal{J}_{\delta}$ then the support of $\beta_i$ is contained in $10 I$, where the dilation is taken from the center of $I$.

\end{enumerate}
Set $m_i(\xi)=\beta_i(\xi_1)b(\xi)m_{\delta, \lambda}(\xi)$, and define an operator $T_i$ by 
\begin{align*}
T_if(x)=\delta^{-\lambda}\mathcal{F}^{-1}[m_i\widehat f](x). 
\end{align*}
Set
\begin{align*}
\mathfrak{I}_1=\{i: \text{supp}(\beta_i)\cap(\cup\mathcal{J}_{\delta, 1})\ne\emptyset\},
\end{align*}
and for $l=2, \ldots, C_1(\delta)$, set 
\begin{align*}
\mathfrak{I}_l=\{i:\,\text{supp}(\beta_i)\cap (\cup\mathcal{J}_{\delta, l-1})=\emptyset\text{ and }\text{supp}(\beta_i)\cap (\cup\mathcal{J}_{\delta, l})\ne\emptyset\}.
\end{align*}
We write
\begin{align*}
\sum_iT_if(x)=\sum_{l=1}^{C_1(\delta)}\sum_{i\in \mathfrak{I}_l}T_if(x).
\end{align*}
We now proceed with an argument similar to the familiar one from \cite{cor}. Using the triangle inequality, H\"{o}lder's inequality and Plancherel, we have
\begin{multline}\label{step1}
\Norm{\sum_iT_if}_{2n}^{2n}\lesssim\bigg(\sum_{l=1}^{C_1(\delta)}\Norm{\sum_{i\in\mathfrak{I}_l}T_if}_{2n}\bigg)^{2n}\lesssim C_1(\delta)^{2n-1}\sum_{l=1}^{C_1(\delta)}\Norm{\sum_{i\in\mathfrak{I}_l}T_if}_{2n}^{2n}
\\
\lesssim C_1^{2n-1}(\delta)\sum_{l=1}^{C_1(\delta)}\int_{\mathbb{R}^2}\bigg|\sum_{i\in\mathfrak{I}_l} T_if(x)\bigg|^{2n}\,dx
\\
\lesssim C_1(\delta)^{2n-1}\sum_{l=1}^{C_1(\delta)}\int \bigg|\sum_{i_1, \ldots, i_{n}\in\mathfrak{I}_l}T_{i_1}f(x)T_{i_2}f(x)\cdots T_{i_{n}}f(x)\bigg|^2\,dx\
\\
\lesssim C_1(\delta)^{2n-1}\sum_{l=1}^{C_1(\delta)}\int\bigg|\sum_{i_1, \ldots, i_{n}\in\mathfrak{I}_l}\widehat{T_{i_1}f}\ast\widehat{T_{i_2}f}\ast\cdots\ast\widehat{T_{i_{n}}f}(\xi)\bigg|^2\,d\xi.
\end{multline}
Now note that no point of $\mathbb{R}^2$ is contained in more than $C_2(\delta)$ of the sets 
\begin{align*}
\bigg\{\text{supp}(\widehat{T_{i_1}f}\ast\widehat{T_{i_2}f}\ast\cdots\ast\widehat{T_{i_{n}}f}(\xi))\bigg\}_{i_1,\ldots, i_{n}\in\mathfrak{I}_l}. 
\end{align*}
Set $C_3(\delta)=C_1(\delta)^{2n-1}C_2(\delta)(\log(\delta^{-1}))^3$. It follows that the right hand side of (\ref{step1}) is bounded by a constant times
\begin{align}\label{step2}
C_3(\delta)\sum_{l=1}^{C_1(\delta)}\int\sum_{i_1, \ldots, i_{n}\in\mathfrak{I}_l}|\widehat{T_{i_1}f}\ast\widehat{T_{i_2}f}\ast\cdots\ast\widehat{T_{i_{n}}f}(\xi)|^2\,d\xi,
\end{align}
and by Plancherel, (\ref{step2}) is equal to
\begin{multline}\label{step3}
C_3(\delta)\sum_{l=1}^{C_1(\delta)}\int\sum_{i_1,\ldots, i_{n}\in\mathfrak{I}_l}|T_{i_1}f(x)T_{i_2}f(x)\cdots T_{i_{n}}f(x)|^2\,dx
\\
\lesssim
C_3(\delta)\sum_{l=1}^{C_1(\delta)}\int\sum_{i_1, \ldots, i_{n}\in\mathfrak{I}_l}|T_{i_1}f(x)T_{i_2}f(x)\cdots T_{i_{n}}f(x)|^2\,dx
\\
\lesssim
C_3(\delta)\sum_{l=1}^{C_1(\delta)}\int\bigg(\sum_{i\in\mathfrak{I}_l}|T_{i}f(x)|^2\bigg)^{n}\,dx.
\\
\end{multline}
Let $\phi:[-2, 2]\to\mathbb{R}$ be a smooth function identically $1$ on $[-1, 1]$. For each $i$, write $\beta_i=\beta(|I|^{-1}(\cdot-c_I))$ for some $\beta\in\mathfrak{B}$ and set $\psi_i(\xi)=\phi(|I|^{-1}(\xi_1-c_I))$. Define a multiplier operator $S_i$ by
\begin{align*}
S_if=\mathcal{F}^{-1}[\psi_i\widehat f].
\end{align*}
If $K_i$ denotes the convolution kernel of the operator $T_i$, let $\tilde{T_i}$ be the operator with convolution kernel $|K_i|$. By duality, the right hand side of (\ref{step3}) is bounded by
\begin{multline}\label{step4}
C_3(\delta)\sum_{l=1}^{C_1(\delta)}\bigg(\sup_{\Norm{w}_{{\frac{n}{n-1}}}\le 1}\int\sum_{i\in\mathfrak{I}_l}|T_if(x)|^2w(x)\,dx\bigg)^{n}
\\
\lesssim
C_3(\delta)\sum_{l=1}^{C_1(\delta)}\bigg(\sup_{\Norm{w}_{\frac{n}{n-1}}\le 1}\int\sum_{i\in\mathfrak{I}_l}|S_if(x)|^2(\sup_i|\tilde{T}_iw(x)|)\,dx\bigg)^{n}
\\
\lesssim
C_3(\delta) \sum_{l=1}^{C_1(\delta)}\Norm{\bigg(\sum_{i\in\mathfrak{J}_l}|S_if(x)|^2\bigg)^{1/2}}_{2n}^{2n}\sup_{\Norm{w}_{\frac{n}{n-1}}\le 1}\Norm{\sup_i|\tilde{T}_iw|}_{\frac{n}{n-1}}^{n}.
\end{multline}
By (\ref{multest}) and the assumption $\Norm{M_{\Theta(\Omega), \delta}}_{L^{\frac{n}{n-1}}(\mathbb{R}^d)\to L^{\frac{n}{n-1}}(\mathbb{R}^d)}=O_{\epsilon}(\delta^{-\beta-\epsilon})$, we have
\begin{align}\label{inter3}
\Norm{\sup_i|\tilde{T}_if|}_{\frac{n}{n-1}}\lesssim C(\delta)\delta^{-\beta}\Norm{f}_{\frac{n}{n-1}}
\end{align}
where $C(\delta)=O(\delta^{-\epsilon})$ for every $\epsilon>0$. Moreover, since the supports of the $\psi_i$ are $\lesssim\log(\delta^{-1})$-disjoint, by Rubio de Francia's theorem on square functions for arbitrary collections of intervals \cite{rubio},
we have
\begin{align}\label{square}
\Norm{\bigg(\sum_{i\in\mathfrak{J}_l}|S_if(x)|^2\bigg)^{1/2}}_{2n}\lesssim\log(\delta^{-1})\Norm{f}_{2n}.
\end{align}
Set $C_4(\delta)=C(\delta)^{2n}C_1(\delta)C_3(\delta)\log(\delta^{-1})^{4n}\delta^{-\beta n}$. By (\ref{step4}), (\ref{inter3}) and (\ref{square}), we have
\begin{align}\label{step5}
\Norm{\sum_iT_if}_{2n}\lesssim_{\epsilon}\delta^{-\epsilon} (C_4(\delta))^{1/2n}\Norm{f}_{2n},
\end{align}
and since $C_4(\delta)\lesssim_{\epsilon} C_1(\delta)^{2n}C_2(\delta)\delta^{-\beta n-\epsilon}\lesssim_{\epsilon}\delta^{-\alpha-\beta n-\epsilon}$ for every $\epsilon>0$, this proves (\ref{8est3}) and thus completes the proof of Proposition \ref{finalprop}.
\end{proof}

It only remains to prove Theorem \ref{2ndmainprop} in the case that $\lambda=0$. This will follow fairly easily from Bateman's characterization in \cite{bateman} of all planar sets of directions which admit Kakeya sets and Fefferman's proof in \cite{feff2} that the ball multiplier is unbounded on $L^p(\mathbb{R}^2)$ for $p\ne 2$.

\begin{proof}[Proof of Theorem \ref{2ndmainprop} in the case that $\lambda=0$]
Let $\Theta$ denote the set of all directions associated to $\Omega$. We claim that if $\Theta$ is a union of finitely many lacunary sets of finite order, then $\kappa_{\Omega}=0$. Indeed, suppose that $\Theta$ is a union of $N_1$ lacunary sets of order $N_2$. Then it is easy to see that there is a subset of $\Theta_{\delta}\subset\Theta$ of cardinality $\le N_1(\log(\delta^{-1}))^{N_2}$ such that every element of $\Theta$ is contained in a $\delta$ neighborhood of an element of $\Theta_{\delta}$. It follows that $N(\Omega, \delta)\lesssim N_1(\log(\delta^{-1}))^{N_2}$, and hence $\kappa_{\Omega}=0$.
\newline
\indent
We say that $\Theta$ \textit{admits Kakeya sets} if for each postive integer $N$ there is a collection $\mathcal{R}_{\Theta}^{(N)}$ of rectangles with longest side parallel to a direction in $\Theta$ so that 
\begin{align*}
\bigg|\bigcup_{R\in\mathcal{R}_{\Theta}^{(N)}}R\bigg|\le\frac{1}{N}\bigg|\bigcup_{R\in\mathcal{R}_{\Theta}^{(N)}}\tilde{R}\bigg|,
\end{align*}
where $\tilde{R}$ denotes the rectangle with the same center and width as $R$ but with three times the length. In \cite{bateman}, the following theorem was proved.
\begin{customthm}{G}[Bateman, \cite{bateman}]\label{kakeyaprop}
Fix $1<p<\infty$. The following are equivalent:
\begin{enumerate}
\item $M_{\Theta}$ is bounded on $L^p(\mathbb{R}^2)$;
\item $\Theta$ does not admit Kakeya sets;
\item there exist $N_1, N_2<\infty$ such that $\Theta$ is covered by $N_1$ lacunary sets of order $N_2$.
\end{enumerate}
\end{customthm}
It follows from Theorem \ref{kakeyaprop} that if $\kappa_{\Omega}>0$, then $\Theta$ admits Kakeya sets. We will now show that if $\kappa_{\Omega}>0$, then $T_{0}$ is unbounded on $L^p$ for all $p\ne 2$. Assume that $T_0$ is bounded on $L^p$ for some $p>2$. Let $\{v_j\}$ be a sequence of unit vectors parallel to directions in $\Theta$, and let $H_j$ denote the half-plane $\{x\in\mathbb{R}^2:\,x\cdot v_j\ge 0\}$. For each $j$, define an operator $T_j$ by
\begin{align*}
\mathcal{F}[T_jf](\xi)=\chi_{H_j}(\xi)\widehat f(\xi).
\end{align*}
Then arguing as in \cite{feff2}, there is an absolute constant $C$ (independent of the choice of the sequence $\{v_j\}$) such that
\begin{align*}
\Norm{(\sum_j|T_jf_j|^2)^{1/2}}_p\le C\Norm{(\sum_j|f_j|^2)^{1/2}}_p.
\end{align*}
Since $\Theta$ admits Kakeya sets, for each $\upsilon>0$ we may choose a sequence of unit vectors $\{v_j\}$ parallel to directions in $\Theta$ such that there is a collection of rectangles $\{R_j\}$ with the longest side of $R_j$ parallel to $v_j$ and so that
\begin{align*}
\bigg|\bigcup_j R_j|\le\upsilon\bigg|\bigcup_j\tilde{R}_j\bigg|.
\end{align*}
Let $E:=\bigcup_j R_j$ and let $E^{\prime}:=\bigcup_j\tilde{R}_j$. Then arguing as in \cite{feff}, we have
\begin{align*}
\int_E\sum_j|T_j \chi_{R_j}(x)|^2\,dx\gtrsim\sum_j|E\cap\tilde{R}_j|\gtrsim |E^{\prime}|,
\end{align*}
but by H\"{o}lder's inequality
\begin{align*}
\int_E\sum_j|T_j\chi_{R_j}(x)|^2\,dx\lesssim |E|^{(p-2)/p}(\sum_j|R_j|)^{2/p}\lesssim\upsilon^{(p-2)/p}|E^{\prime}|.
\end{align*}
Letting $\upsilon\to 0$ gives a contradiction.
\end{proof}

\section{Lower bounds using Khinchine's inequality}\label{khinsec}

In this section, we will prove Theorem \ref{khin1}, which gives lower bounds on the range of $\lambda$ for which $T_{\lambda}$ is bounded on $L^p$ for general convex domains with a given value of $\kappa_{\Omega}$. To prove Theorem \ref{khin1}, we first show that boundedness of $T_{\lambda}$ on $L^p$ implies (\ref{tdelta}), where $T_{\lambda}^{\delta}$ is defined below. We then test $T_{\lambda}^{\delta}$ on randomly defined functions and apply Khinchine's inequality to estimate the $L^1$ norm of these functions. After applying $T_{\lambda}^{\delta}$, the randomness of these test functions will effectively ``disappear" due to the test functions being essentially constant on a sequence of disjoint caps in $\mathcal{B}_{\delta}$. The $L^2$ mapping properties of $T^{\lambda}$ acting on these functions will be easy to quantify using Plancherel. The last step is simply to interpolate between $L^1$ and $L^2$.

\begin{proof}[Proof of Theorem \ref{khin1}]
Suppose that $T_{\lambda}$ is bounded on $L^p$. Let $\phi\in C_0^{\infty}(\mathbb{R})$ be supported in $[-2, 2]$ and identically $1$ on $[-1, 1]$. Let 
\begin{align*}
m_{\delta}(s)=\phi(\delta^{-1}(1-s))
\end{align*} 
and let $T_{\lambda}^{\delta}$ be the operator defined by
\begin{align*}
\mathcal{F}[T_{\lambda}^{\delta}f](\xi)=m_{\delta}(\rho(\xi))\widehat f(\xi).
\end{align*}
We will use the well-known subordination formula
\begin{align}\label{subord}
m(\rho)=\frac{(-1)^{\floor{\lambda}+1}}{\Gamma(\lambda+1)}\int_0^{\infty}s^{\lambda}m^{(\lambda+1)}(s)(1-\frac{\rho}{s})_+^{\lambda}\,ds,
\end{align}
where
\begin{align*}
\widehat{m^{(\gamma)}}(\tau)=(-1)^{\floor{\gamma}}(-i\tau)^{\gamma}\widehat m(\tau).
\end{align*}
See \cite{trebels} for a proof of (\ref{subord}). Together, (\ref{subord}) and the $L^p$-boundedness of $T_{\lambda}$ imply that
\begin{align}\label{tdelta}
\Norm{T_{\lambda}^{\delta}}_{L^p\to L^p}\lesssim \delta^{-\lambda}.
\end{align}
Let $\mathcal{J}_{\delta}$ denote the collection of $Q(\Omega, \delta)\lesssim\log(2+\delta^{-1})N(\Omega, \delta)$ essentially disjoint intervals obtained from the decomposition of $[-1, 1]$ into intervals with endpoints in $\mathfrak{A}(\delta)=\{a_0, \ldots, a_Q\}$ as described in Section \ref{prelim}. By rotation invariance, we may assume without loss of generality that $Q(\Omega, \delta)\gtrsim N(\Omega, \delta)$. For each $0\le j\le Q-1$, let $c_j=\frac{a_j+a_{j+1}}{2}$. Now observe that for $\delta$ sufficiently small there must be $\gtrsim N(\Omega, \delta)$ indices $j$ such that $a_{j+1}-a_j\le N(\Omega, \delta)^{-1}\log(\delta^{-1})$. Thus by the pigeonhole principle there is an integer $r\ge\floor{\log(N(\Omega, \delta)\log(\delta^{-1})^{-1})}$ such that there are $\gtrsim N(\Omega, \delta)(\log(\delta^{-1}))^{-1}$ indices $j$ such that $2^{-r-1}\le a_{j+1}-a_j\le 2^{-r}$. Enumerate these indices as $j_1<j_2<\cdots<j_{Q^{\prime}}$.
\newline
\indent
Let $\chi_0\in C_{0}^{\infty}(\mathbb{R})$ with $\chi\ge 0$, $\chi\equiv 1$ on $[-1, 1]$ and $\chi$ supported in $[-2, 2]$. Set $\chi(\xi_1, \xi_2)=\chi_0(\xi_1)\chi_0(\xi_2)$. Then $|\mathcal{F}[\chi](x)|\lesssim (1+|x|)^{-2}$ and $|\mathcal{F}[\chi](x)|\ge 1/2$ for $x\in B_{\frac{1}{100}}(0)$. Let $\{\epsilon_i\}$ be i.i.d. random variables with $P(\epsilon_i=\pm 1)=\frac{1}{2}$ for every $i$. Let
\begin{align*}
\psi_{\delta}(x)=\mathcal{F}[\sum_{i\equiv 0\mod(\floor{\log(\delta^{-1})})}\epsilon_i\chi(2^r(\cdot-(c_{j_i}, \gamma(c_{j_i})))](x).
\end{align*}
By Plancherel,
\begin{align}\label{1stl2}
\Norm{\psi_{\delta}}_{2}\lesssim\bigg(N(\Omega, \delta)2^{-2r}\bigg)^{1/2}
\end{align}
and
\begin{align}\label{2ndl2}
\Norm{T_{\lambda}^{\delta}\psi_{\delta}}_2\gtrsim \bigg(N(\Omega,\delta)(\log(\delta^{-1}))^{-1}2^{-r}\delta\bigg)^{1/2}.
\end{align}
By Khinchine's inequality, 
\begin{align}\label{1stl1}
\mathbb{E}[\Norm{\psi_{\delta}}_{1}]\approx Q^{\prime\frac{1}{2}}\lesssim\bigg(\log(2+\delta^{-1})N(\Omega, \delta)\bigg)^{1/2}.
\end{align}
Interpolating (\ref{1stl2}) and (\ref{1stl1}) yields
\begin{align}\label{1stlp}
\Norm{\psi_{\delta}}_p\lesssim \log(\delta^{-1})^{\frac{2-p}{2p}}N(\Omega, \delta)^{1/2}2^{-r(\frac{2p-2}{p})}, \qquad 1\le p\le 2.
\end{align}
We now prove a lower bound for $\Norm{T_{\lambda}^{\delta}\psi_{\delta}}_1$ uniformly in the realization of the random variables $\{\epsilon_i\}$. Using homogeneous coordinates, i.e. polar coordinates associated to $\Omega$, we write
\begin{multline*}
T_{\lambda}^{\delta}\psi_{\delta}(x)=\frac{1}{(2\pi)^2}\sum_{i\equiv 0\mod(\floor{\log(\delta^{-1})})}\epsilon_i\int\int\phi(\delta^{-1}(1-s))\chi_0(2^r(s\alpha-c_{j_i}))
\\
\times s(\alpha\gamma^{\prime}(\alpha)-\gamma(\alpha))e^{is(x_1\alpha+x_2\gamma(\alpha))}\,d\alpha\,ds.
\end{multline*}
Now note that for each $i$ and for $\alpha$ in the support of $\chi_0(2^r(s\alpha-c_{j_i}))$ we have
\begin{multline*}
e^{is(x_1\alpha+x_2\gamma(\alpha))}=
\\
\exp\bigg(is(x_1c_{j_i}+x_2\gamma(c_{j_i}))+is(\alpha-c_{j_i})(x_1+x_2\gamma^{\prime}(c_{j_i}))\bigg)+O(\delta|x|)
\end{multline*}
and
\begin{align*}
|\alpha\gamma^{\prime}(\alpha)-\gamma(\alpha)-c_{j_i}\gamma^{\prime}(c_{j_i})+\gamma(c_{j_i})|=O(2^{-r}).
\end{align*}
It follows that
\begin{multline*}
T_{\lambda}^{\delta}\psi_{\delta}(x)=\frac{1}{(2\pi)^2}\sum_{i\equiv 0\mod(\floor{\log(\delta^{-1})})}\epsilon_i\int\int\phi(\delta^{-1}(1-s))\chi_0(2^r(s\alpha-c_{j_i}))
\\
\times s(c_{j_i}\gamma^{\prime}(c_{j_i})-\gamma(c_{j_i}))e^{is(x_1c_{j_i}+x_2\gamma(c_{j_i}))+is(\alpha-c_{j_i})(x_1+x_2\gamma^{\prime}(c_{j_i}))}\,d\alpha\,ds 
\\
+ O(2^{-2r}\delta)+O(2^{-r}\delta^2|x|).
\end{multline*}
Rearranging this, we have
\begin{multline*}
T_{\lambda}^{\delta}\psi_{\delta}(x)=\frac{1}{(2\pi)^2}\sum_{i\equiv 0\mod(\floor{\log(\delta^{-1})})}\epsilon_i\int s\phi(\delta^{-1}(1-s))\bigg(\int\chi_0(2^r(s\alpha-c_{j_i}))
\\
\times (c_{j_i}\gamma^{\prime}(c_{j_i})-\gamma(c_{j_i}))e^{is\alpha(x_1+x_2\gamma^{\prime}(c_{j_i}))}\,d\alpha\bigg)
\\
\times e^{is(x_1c_{j_i}+x_2\gamma(c_{j_i})-c_{j_i}(x_1+x_2\gamma^{\prime}(c_{j_i})))}\,d\alpha\,ds + O(2^{-2r}\delta)+O(2^{-r}\delta^2|x|).
\end{multline*}
Set $\beta_{i}=c_{j_i}\gamma^{\prime}(c_{j_i})-\gamma(c_{j_i})$. Note that $\beta_i\approx 1$ for all $i$. We may rewrite this as
\begin{multline*}
T_{\lambda}^{\delta}\psi_{\delta}(x)=\sum_{i\equiv 0\mod(\floor{\log(\delta^{-1})})}\epsilon_i\beta_i2^{-r}\widehat{\chi_0}(-2^{-r}(x_1+x_2\gamma^{\prime}(c_{j_i})))
\\
\times\delta\cdot\widehat\phi(\delta(x_1c_{j_i}+x_2\gamma(c_{j_i})))e^{i(x_1c_{j_i}+x_2\gamma(c_{j_i}))}
+ O(2^{-2r}\delta)+O(2^{-r}\delta^2|x|).
\end{multline*}
It follows that there is a constant $C>0$ (independent of $\delta$) such that for each $i$ in the sum, 
\begin{align*}
|T_{\lambda}^{\delta}\psi_{\delta}(x)|\ge 2^{-r-10}\delta
\end{align*}
whenever
\begin{align*}
|x\cdot(c_{j_i}, \gamma(c_{j_i}))|\le C\delta^{-1},\, |x\cdot (1, \gamma^{\prime}(c_{j_i}))|\le C2^{r}.
\end{align*}
It follows that
\begin{align}\label{2ndl1}
\Norm{T_{\lambda}^{\delta}\psi_{\delta}}_1\gtrsim Q^{\prime}=\log(\delta^{-1})^{-1}N(\Omega, \delta)
\end{align}
for $\delta>0$ sufficiently small. Interpolating (\ref{2ndl2}) and (\ref{2ndl1}) gives that 
\begin{align}\label{2ndlp}
\Norm{T_{\lambda}^{\delta}\psi_{\delta}}_p\gtrsim (\log(\delta^{-1}))^{\beta}N(\Omega, \delta)^{\frac{1}{p}}(2^{-r}\delta)^{\frac{p-1}{p}},\qquad 1\le p\le 2
\end{align}
for some $\beta\in\mathbb{R}$. Together (\ref{1stlp}) and (\ref{2ndlp}) imply that
\begin{align}\label{finallp}
\Norm{T_{\lambda}^{\delta}}_{L^p\to L^p}\gtrsim (\log(\delta^{-1}))^{\beta^{\prime}}N(\Omega, \delta)^{\frac{1}{2}}\delta^{\frac{p-1}{p}}
\end{align}
for some $\beta^{\prime}\in\mathbb{R}$. By (\ref{tdelta}), it follows that $\lambda\ge 1-\frac{\kappa_{\Omega}}{2}-\frac{1}{p}$.

\end{proof}

\section{Concluding remarks}\label{concsec}
There are many further questions that arise naturally from the results of this paper; we now discuss a few of them. As previously mentioned, Theorem \ref{2ndmainprop} demonstrates that how ``curved" the boundary of a convex planar domain is, as measured by the parameter $\kappa_{\Omega}$, does not alone determine the $L^p$ mapping properties of the associated Bochner-Riesz operators, but rather there must be other properties of $\Omega$ that play a role. We have seen that domains that satisfy $\mathcal{E}_n(\partial\Omega)=0$ for some $n>2$ can be shown to satisfy $L^p$ mapping properties better than those proved in \cite{sz}. It would be very interesting to construct domains for which $\mathcal{E}_n(\partial\Omega)=0$ for some $n>2$ as well as having an associated set of directions which is $q$-sparse for $q=\frac{n}{n-1}$; for such domains Theorem \ref{genthm} would imply that $p_{\text{crit}}<4/3$. As a simpler preliminary question, it would be already very interesting to construct non-lacunary sets of directions that are $q$-sparse for some $q<2$.
\newline
\indent
Another question one might also is if for \textit{any} $\kappa\in (0, 1/2)$ (not just for $\kappa$ sufficiently small) we can construct domains for which $p_{\text{crit}}<4/3$. At the very least, we believe that the upper bound on $\kappa_{\Omega}$ in Theorem \ref{2ndmainprop} could be significantly improved with more sophisticated algebraic disjointness constructions than the one used in the proof of Lemma \ref{disjlemma}. In particular, the domains constructed to prove Theorem \ref{2ndmainprop} only exploited algebraic disjointness in one dimension, and it is quite likely that a two-dimensional approach will yield much better results. Finally, it would be interesting if one could determine whether one may prove improved $L^p$ bounds for other certain specific examples of convex domains, such as those with associated directions lying in a standard Cantor set.

\end{document}